\newtheorem{theorem}{Theorem}[section]
\newtheorem{lemma}[theorem]{Lemma}
\newtheorem{proposition}[theorem]{Proposition}
\newtheorem{remark}[theorem]{Remark}
\newtheorem{definition}[theorem]{Definition}
\newcommand{\nc}{\newcommand}
\nc{\cH}{{\mathcal H}}
\nc{\cA}{{\mathcal A}}
\nc{\cG}{{\mathcal G}}
\nc{\cC}{{\mathcal C}}
\nc{\cO}{{\mathcal O}}
\nc{\cI}{{\mathcal I}}
\nc{\cB}{{\mathcal B}}
\nc{\cY}{{\mathcal Y}}
\nc{\cK}{{\mathcal K}}
\nc{\cX}{{\mathcal X}}
\nc{\cS}{{\mathcal S}}
\nc{\cE}{{\mathcal E}}
\nc{\cF}{{\mathcal F}}
\nc{\cZ}{{\mathcal Z}}
\nc{\cQ}{{\mathcal Q}}
\nc{\cN}{{\mathcal N}}
\nc{\cP}{{\mathcal P}}
\nc{\cL}{{\mathcal L}}
\nc{\cM}{{\mathcal M}}
\nc{\cT}{{\mathcal T}}
\nc{\cW}{{\mathcal W}}
\nc{\cU}{{\mathcal U}}
\nc{\cJ}{{\mathcal J}}
\nc{\cV}{{\mathcal V}}
\nc{\bH}{{\mathbb H}}
\nc{\bA}{{\mathbb A}}
\nc{\bG}{{\mathbb G}}
\nc{\bC}{{\mathbb C}}
\nc{\bO}{{\mathbb O}}
\nc{\bI}{{\mathbb I}}
\nc{\bB}{{\mathbb B}}
\nc{\bY}{{\mathbb Y}}
\nc{\bK}{{\mathbb K}}
\nc{\bX}{{\mathbb X}}
\nc{\bS}{{\mathbb S}}
\nc{\bE}{{\mathbb E}}
\nc{\bF}{{\mathbb F}}
\nc{\bZ}{{\mathbb Z}}
\nc{\bQ}{{\mathbb Q}}
\nc{\bN}{{\mathbb N}}
\nc{\bP}{{\mathbb P}}
\nc{\bL}{{\mathbb L}}
\nc{\bM}{{\mathbb M}}
\nc{\bT}{{\mathbb T}}
\nc{\bW}{{\mathbb W}}
\nc{\bU}{{\mathbb U}}
\nc{\bD}{{\mathbb D}}
\nc{\bJ}{{\mathbb J}}
\nc{\bV}{{\mathbb V}}
\nc{\bbZ}{{\mathbb Z}}
\nc{\bR}{{\mathbb R}}
\nc{\fr}{{\rightarrow}}
\nc{\co}{{\nabla}}
\nc{\cu}{{\overlineline{\nabla}}}
\title {On subfields of the function field of a general surface in $\bP^3$}
\author{Yongnam Lee and Gian Pietro Pirola}
\date{}
\address{Department of Mathematical Sciences, KAIST, 291 Daehak-ro, Yuseong-gu, Daejon 305-701, Korea}
\email{ynlee@kaist.ac.kr}
\address{Dipartimento di Matematica, Universit\`a di Pavia via Ferrata 1, 27100 Pavia, Italia}
\email{gianpietro.pirola@unipv.it}
\subjclass[2010]{Primary 14E05, Secondary 14H10, 14J29}
\begin{document}

\begin{abstract}
In this paper we study birational immersions from a very general smooth plane curve to a non-rational surface with $p_g=q=0$ to treat dominant rational maps from a very general surface $X$ of degree$\geq 5$ in $\bP^3$ to smooth projective surfaces $Y$. Based on the classification theory of algebraic surfaces, Hodge theory, and deformation theory, we prove that there is no dominant rational map from $X$ to $Y$ unless $Y$ is rational or $Y$ is birational to $X$.
\end{abstract}
\maketitle



Riemann-Hurwitz Theorem (Chapter XXI in \cite{ACG}) says that if $\phi: C\to C'$ is a non-constant morphism from a very general curve $C$ of genus $g>1$ onto a curve $C'$ then either $\phi$ is birational, or else $C'$ is rational. It is interesting to investigate the same statement for higher dimensional varieties of general type under some assumption of generality in a suitable moduli space. Let $X$ be a smooth complex projective variety of general type. The dominant rational maps of finite degree $X\dashrightarrow Y$ to smooth varieties of general type, up to birational equivalence of $Y$ form a finite set. The proof follows from the approach of Maehara \cite{Ma}, combined with the results of Hacon and McKernan \cite{HM}, of Takayama \cite {Ta}, and of Tsuji \cite{Ts}.

Motivated by this finiteness theorem for dominant rational maps on a variety of general type and by the results obtained in \cite{gp} we study dominant rational maps from a very general complex surface $X$ of degree $d \geq 5$ in $\bP^3$ to smooth projective surfaces $Y$. The main result of this paper is the following.

\begin{theorem} \label{main}
Let $X\subset \bP^3 $ be a very general smooth complex surface of degree $d>4.$
Then there is no dominant rational map $f$ from $X$ to any non rational surface $Y$ unless $f$ is a birational map.
\end{theorem}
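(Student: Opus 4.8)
The plan is to combine the Enriques--Kodaira classification, the Hodge theory of the transcendental cohomology, and the restriction of $f$ to hyperplane sections, so that the whole problem is reduced to the behaviour of very general plane curves. Throughout I resolve the indeterminacy of $f$ (a finite set of points, since $f$ is a rational map of surfaces) to obtain a generically finite morphism $\tilde f\colon \tilde X\to Y$ of degree $n=\deg f$, where $\tilde X\to X$ is a composition of blow-ups. First I record the invariants: because $d>4$, the surface $X$ is of general type with $q(X)=0$ and $p_g(X)=\binom{d-1}{3}\ge 4$, and for very general $X$ the Noether--Lefschetz theorem gives $\mathrm{Pic}(X)=\bZ\cdot H$. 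Pulling back holomorphic $1$-forms along $\tilde f$ and using $q(X)=0$ immediately yields $q(Y)=0$, which already rules out any irrationally ruled, abelian, or bielliptic $Y$.

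Next I would exploit the transcendental Hodge structure $T(X):=H^2_{\mathrm{tr}}(X,\bQ)$. For very general $X$ the monodromy of the universal family of degree-$d$ surfaces in $\bP^3$ acts irreducibly on the vanishing cohomology, so $T(X)$ is an irreducible $\bQ$-Hodge structure. Since blow-ups add only algebraic classes one has $T(\tilde X)=T(X)$, and from $\tilde f_*\tilde f^*=n$ it follows that $\tilde f^*$ maps $T(Y)$ into $T(X)$ as an \emph{injective} morphism of Hodge structures (the image of a transcendental class cannot be algebraic, as $f_*$ preserves algebraicity). Irreducibility of $T(X)$ then forces the dichotomy: either $T(Y)=0$, in which case $H^2(Y,\bQ)$ is entirely algebraic and hence $p_g(Y)=q(Y)=0$; or $f^*\colon T(Y)\xrightarrow{\ \sim\ }T(X)$ is an isomorphism, whence $p_g(Y)=p_g(X)\ge 4$, so that $Y$ is of general type (this excludes K3 and Enriques by the value of $p_g$).

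Both cases I funnel through the restriction of $f$ to a very general hyperplane section $C=X\cap H$, a very general plane curve of degree $d$ and genus $g=\binom{d-1}{2}\ge 6$ (a point needing care: that a general section of a very general surface is indeed very general as a plane curve). As $C$ varies in $|\cO_X(1)|$ the image curves $D=\overline{f(C)}$ sweep out $Y$, giving a covering family of curves each dominated by a very general plane curve. If the general $D$ were rational then $Y$ would be uniruled, hence rational as $q(Y)=0$; so for non-rational $Y$ the general $D$ is non-rational, and by the Riemann--Hurwitz phenomenon for very general plane curves (the analogue, in the spirit of \cite{ACG} and \cite{gp}, established earlier in the paper) the map $C\to D$ is birational. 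Thus $\overline f|_C$ is a \emph{birational immersion}, moving in a family that covers $Y$. In the case $T(Y)=0$ this is a moving birational immersion of a very general plane curve into a non-rational surface with $p_g=q=0$, which the birational-immersion results proved earlier forbid: contradiction, so $Y$ is rational. In the remaining case $Y$ is of general type with $T(Y)\cong T(X)$, and the corresponding rigidity—an isomorphism of the irreducible transcendental Hodge structures of a very general $X$, combined with the moving birational immersion analysis (or, alternatively, generic Torelli for surfaces in $\bP^3$ together with the projection formula $\langle f^*a,f^*b\rangle=n\langle a,b\rangle$)—forces $n=1$, so that $f$ is birational and $Y$ is birational to $X$.

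The genuine difficulty is the case $p_g(Y)=q(Y)=0$ with $Y$ non-rational: the deformation-theoretic statement that a very general plane curve admits no birational immersion moving in a covering family inside such a surface is the technical heart of the argument, and it is precisely the content of the birational-immersion results the theorem is built upon. A secondary delicate point is the rigidity needed to rule out degree $\ge 2$ in the general-type case, since the integral intersection form alone does not force $n=1$ and one must invoke the Hodge-theoretic (Torelli-type) input.
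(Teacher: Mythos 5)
Your overall strategy is the paper's: reduce by Hodge theory to targets with $p_g=q=0$, classify, restrict $f$ to hyperplane sections to produce a birational immersion of a very general plane curve, and invoke the immersion results. But there is a genuine gap at the decisive step. The birational-immersion theorems established in the paper do \emph{not} cover all degrees $d\geq 5$: Proposition~\ref{gen} needs $d\geq 9$ for general-type targets, Proposition~\ref{Enriques} needs $d\geq 7$, Proposition~\ref{plane-elliptic} needs $d\geq 8$, and Theorem~\ref{elliptic} needs $d\geq 6$ \emph{and} $\mathrm{Pic}(S)$ torsion free. Your blanket appeal to ``the birational-immersion results proved earlier forbid this'' therefore fails precisely in the low-degree range, and in particular for $d=5$, which is where most of the actual work in the paper's proof lies. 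Concretely, the paper must (a) quote Theorem~4.2.1 of \cite{gp} to dispose of general-type targets for $d\leq 8$ (actually $d \le 11$ there), and (b) run a dedicated argument for $d=5$ with elliptic $Y$: writing $g^\ast K_Y=rL-W$ and $K_Z=g^\ast K_Y+R$ with $r+s=d-4=1$ forces $\rho=1$, hence $N=2$, $k_1=2$, $k_2=3$; the branch divisor is then confined to the (rigid) exceptional image, and the moduli comparison $10\geq M(Y)\geq M(X)=40$ via Proposition~\ref{moduli-elliptic} yields the contradiction. None of this is recoverable from your outline.

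Two secondary points. First, your Hodge-theoretic dichotomy on $T(X)$ must also deliver that $Y$ is \emph{simply connected} (this is what \cite{gp}, Section 3.5 actually provides): without it you cannot exclude Enriques surfaces in the $p_g=q=0$ branch for $d=5,6$ (Proposition~\ref{Enriques} starts at $d=7$), and you cannot assert $\mathrm{Pic}(Y)$ torsion free, which is the hypothesis letting you use the $d\geq 6$ bound of Theorem~\ref{elliptic} rather than the $d\geq 8$ bound of Proposition~\ref{plane-elliptic}. Second, your argument that $f|_C$ is birational onto its image should not rest on the ``Riemann--Hurwitz phenomenon'' for abstract very general curves; the paper's Lemma~\ref{sla} uses the simplicity of the Jacobian of a general hyperplane section of $X$, which is the correct input here. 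The rigidity claim in your general-type branch ($T(Y)\cong T(X)$ forces $\deg f=1$) is likewise only sketched, but since the paper also outsources that reduction to \cite{gp} this is a citation issue rather than a mathematical one.
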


We recall  that a very general element of $U$ has the property $P$ if $P$ holds in the complement of a union of countably many
proper subvarieties of $U.$ We get immediately the following completely algebraic version of our theorem.

\begin{theorem}
Let $K$ be the function field of a very general complex surface in $\bP^3$ of degree $d>4.$  Let $\bC\subsetneq K' \subsetneq K$ be a proper subfield of $K.$ Then $K'$ is isomorphic either to   $\bC(x)$, if the transcendence degree of $K'$  is $1$, or to  $\bC(x, y)$ if $K'$ has transcendence degree $2$.
\end{theorem}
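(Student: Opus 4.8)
The plan is to pass from subfields to dominant rational maps via the standard dictionary and then invoke Theorem~\ref{main}. First I would record that a subfield of a finitely generated field extension of $\bC$ is again finitely generated over $\bC$; since $K=\bC(X)$ is finitely generated, $K'$ is the function field of a smooth projective variety $Y$, and the inclusion $\bC\subsetneq K'\subsetneq K$ corresponds to a dominant rational map $f\colon X\dashrightarrow Y$ that is not birational (because $K'\neq K$). As $\operatorname{trdeg}_{\bC}K=2$ and $K'\neq\bC$, the transcendence degree $t$ of $K'$ is either $1$ or $2$, giving $\dim Y=t$; I treat the two cases separately.

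Suppose $t=1$. Then $Y=C'$ is a smooth projective curve and $f\colon X\dashrightarrow C'$ becomes a dominant morphism after resolving indeterminacy on a smooth model $\widetilde X\to X$. Any holomorphic $1$-form on $C'$ pulls back to a nonzero holomorphic $1$-form on $\widetilde X$, so $h^0(C',\Omega^1_{C'})\le h^0(\widetilde X,\Omega^1_{\widetilde X})=q(X)$. A smooth surface in $\bP^3$ has $q=0$ by the Lefschetz hyperplane theorem, hence $g(C')=0$, i.e.\ $C'\cong\bP^1$ and $K'\cong\bC(x)$.

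Suppose $t=2$. Then $Y$ is a smooth projective surface and, the transcendence degrees of $K'$ and $K$ being equal, $K/K'$ is a finite extension; since the inclusion is proper, $[K:K']\ge 2$, so $f$ is generically finite of degree $\ge 2$ and in particular not birational. By Theorem~\ref{main}, a very general $X$ of degree $d>4$ admits no dominant rational map to a non-rational surface other than a birational one; therefore $Y$ must be rational. A rational surface is birational to $\bP^2$, so $K'=\bC(Y)\cong\bC(\bP^2)=\bC(x,y)$, as claimed.

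The only genuinely difficult input is Theorem~\ref{main} itself, which is assumed; once it is available the statement is a formal translation, the sole delicate points being the finite generation of the subfield $K'$ and the vanishing $q(X)=0$ used to rule out irrational pencils in the curve case.
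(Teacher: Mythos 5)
Your argument is correct and is exactly the intended derivation: the paper states this theorem as an immediate consequence of Theorem~\ref{main}, and your proposal simply spells out the standard dictionary between subfields and dominant rational maps, disposing of the transcendence-degree-one case via $q(X)=0$ and of the degree-two case via Theorem~\ref{main}. No gaps; the only inputs beyond the main theorem (finite generation of intermediate fields, Lefschetz giving $q(X)=0$) are standard and correctly invoked.
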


If one chooses a special surface $X$ in $\bP^3$ then it might have a dominant rational map to a surface of general type $Y$. Classical Godeaux surfaces $Y$ (minimal surfaces of general type with $p_g(Y):= h^0(Y, K_Y)=0$, $q(Y):= h^1(Y, \cO_Y)=0$, $K^2=1$, and $\pi_1(Y)=\bZ_5$) are obtained by the $\bZ_5$-quotient of $\bZ_5$-invariant quintics \cite{Go}. \\

As far as we know this gives the first examples of fields of transcendence degree $2$ of non-ruled surfaces that do not contain any proper non-rational field. This could have applications to field theory and to absolute Galois theory.\\

To prove our main Theorem, we study birational immersions from a very general smooth plane curve to a non-rational surface with $p_g=q=0.$

\begin{theorem} \label{immersion}
If $C$ is a very general smooth plane curve of degree $d\geq 9$ then there is no birational immersion $\kappa$ from $C$ into any non-rational surface $S$ with $p_g=q=0.$
\end{theorem}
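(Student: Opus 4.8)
The plan is to argue by contradiction through a deformation count, using the classification of surfaces with $p_g=q=0$ to bound the moduli of the target. Suppose a very general plane curve $C$ of degree $d\ge 9$, of genus $g=\binom{d-1}{2}$, admits a birational immersion $\kappa\colon C\to S$ with $S$ non-rational and $p_g(S)=q(S)=0$. Since such surfaces fall into countably many deformation classes and, as $q(S)=0$ forces $\mathrm{Pic}^0(S)=0$, the relevant curves on each vary in countably many algebraic families, the meaning of ``very general'' lets me fix one such family: an irreducible base $B$, a family $\cS\to B$ of these surfaces, a map $B\to |\cO_{\bP^2}(d)|$ giving the plane curves $C_b$, and birational immersions $\kappa_b\colon C_b\to \cS_b$, arranged so that the induced $B\to \cM_g$ dominates the locus of plane curves. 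Because a smooth plane curve of degree $d\ge 9$ carries a unique $g^2_d$, this locus has dimension $\binom{d+2}{2}-1-\dim \mathrm{PGL}_3=\binom{d+2}{2}-9$. Sending $(C_b,\cS_b,\kappa_b)$ to the pair $(\cS_b, D_b:=\kappa_b(C_b))$ is finite onto its image, since the normalization recovers $C_b$ and $\kappa_b$ up to the finite group $\mathrm{Aut}(C_b)$; hence the family of pairs $(S,D)$ that occurs has dimension at least $\binom{d+2}{2}-9$.

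I would then bound this dimension from above by deformation theory, and it is precisely here that the immersion hypothesis is used. Since $\kappa$ is an immersion, the normal sheaf $N_\kappa$ in $0\to T_C\to \kappa^*T_S\to N_\kappa\to 0$ is a line bundle on $C$ of degree $\deg N_\kappa=2g-2-K_S\cdot D$. Taking determinants in the dual conormal sequence gives $\kappa^*K_S=K_C\otimes N_\kappa^{-1}$, so by Serre duality $h^1(C,N_\kappa)=h^0(C,\kappa^*K_S)$ and therefore $h^0(C,N_\kappa)=g-1-K_S\cdot D+h^0(C,\kappa^*K_S)$. The deformations of the pair (map $\kappa$, surface $S$) are governed by an exact sequence involving $H^0(C,N_\kappa)$ and $H^1(S,T_S)$, whence the family of pairs $(S,D)$ has dimension at most $h^0(C,N_\kappa)+h^1(S,T_S)$. (If $h^0(C,N_\kappa)=0$ the pairs already form a family of dimension at most $h^1(S,T_S)$, and the final contradiction is reached directly.)

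The positivity of $K_S$ now enters. As $S$ is non-rational, its minimal model has nef canonical class, and since $D$ has genus $g\ge 28>0$ it is contracted by no birational morphism, so $K_S\cdot D\ge 0$. When $h^0(C,N_\kappa)>0$ the line bundle $\kappa^*K_S$ is special, and Clifford's inequality yields $h^0(C,\kappa^*K_S)\le 1+\tfrac12 K_S\cdot D$, whence $h^0(C,N_\kappa)\le g-\tfrac12 K_S\cdot D\le g$. Using $p_g=q=0$, that is $\chi(\cO_S)=1$, the classification of surfaces with $p_g=q=0$ — those of general type with $1\le K_S^2\le 9$ by Bogomolov--Miyaoka--Yau, the Enriques surfaces, and the properly elliptic ones with $e(S)=12$ — bounds $h^1(S,T_S)$ by an absolute constant $M$. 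The two estimates then force $\binom{d+2}{2}-9\le g+M$, that is $3d-9\le M$, which fails for every $d\ge 9$ as soon as $M\le 17$; this is the desired contradiction.

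The decisive step, and the main obstacle, is the uniform bound $M$ on $h^1(S,T_S)$: because $p_g=0$ leaves no holomorphic two-form to pull back, no direct Hodge-theoretic comparison is available, and one is forced to control the target's moduli by the surface classification. Here the properly elliptic surfaces with $\chi(\cO_S)=1$ carry the largest deformation spaces and are exactly what pins the threshold to $d\ge 9$; verifying $h^1(S,T_S)<18$ across all three classes, and reducing the non-minimal case to the minimal one (which also underlies $K_S\cdot D\ge 0$), is the technical heart. Should the classification input prove too blunt, a Hodge-theoretic refinement can sharpen the count: since non-rationality gives $P_2(S)>0$, one can analyze infinitesimally how the pluricanonical forms of $S$ restrict along the moving curve — an adjoint-type argument — to replace the crude Clifford estimate and tighten the dimension comparison.
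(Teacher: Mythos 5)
Your outline follows the paper's strategy (Baire category to extract a dominating family, the exact sequence $0\to T_C\to\kappa^*T_S\to N\to 0$, Clifford's bound $h^0(N')\le g-\tfrac12\deg\kappa^*K_S$, and a comparison with $\dim M(d)=\tfrac{(d+1)(d+2)}{2}-9$), but the step you yourself identify as the technical heart is not merely technical: it is false as stated. There is no absolute constant $M$ bounding $h^1(S,T_S)$ over all non-rational surfaces with $p_g=q=0$. For minimal properly elliptic surfaces with $p_g=q=0$ (Dolgachev-type surfaces), one has $h^1(T_S)=10+h^0(\Omega^1_S\otimes K_S)$ with $K_S=(N-1)F-\sum F_i$, and the moduli dimension grows like $8+N$ where $N$ is the number of multiple fibers, which is unbounded (one already sees $\ge N-3$ parameters from the choice of the points of $\bP^1$ over which the logarithmic transforms are performed). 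So your inequality $3d-9\le M$ cannot be closed uniformly, and the properly elliptic case — which is exactly the case that pins the threshold — is left open. A secondary arithmetic point: even where an absolute bound exists (general type, $M\le 19$ by Guerra--Pirola), your crude form $3d-9\le M$ only gives $d\le 9$; you need the Clifford correction term $-\tfrac12 K_S\cdot D$ together with $K_S$ nef and big (so $K_S\cdot D\ge 2$ for a moving curve of genus $\ge 28$) to exclude $d=9$ itself.

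The missing idea, which the paper supplies as Lemma~\ref{elliptic-1}, is to bound $N$ in terms of $d$ \emph{before} estimating the moduli of $S$: compose $\kappa$ with the elliptic fibration to get $f=\pi\circ\kappa:C\to\bP^1$ of degree $\alpha\ge d-1$ (the gonality of a smooth plane curve), observe that points over a multiple fiber ramify with index $\ge\alpha/2$, and play the Hurwitz formula against a dimension count on the Hurwitz scheme (which must dominate $M(d)$, forcing $N+r-3\ge\dim M(d)$). This yields $N\le d-2$, whence $M(S)\le 8+N\le 6+d$ by the paper's Proposition~\ref{moduli-elliptic} (itself requiring the computation $h^0(\Omega^1_S(nF))=n-1$ for non-isotrivial fibrations). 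Only then does the comparison close: $3d<17+N-\tfrac12\deg\kappa^*K_S\le 15+d$ gives $d\le 7$ in the elliptic case. Your proposed Hodge-theoretic fallback via $P_2(S)>0$ addresses the Clifford estimate, not the actual obstruction, which is the unbounded moduli of the target; without a bound on $N$ in terms of $d$ the argument does not go through.
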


Moreover, we obtain

\begin{theorem} ($=$Theorem~\ref{elliptic})
Let $C$ be a very general smooth plane curve of degree $d\geq 6$. Then there is no birational immersion $\kappa$ from $C$ into any elliptic surface $S$ with $p_g=q=0$ of Kodaira dimension $1$ if ${\rm Pic}(S)$ is torsion free.
\end{theorem}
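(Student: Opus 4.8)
The plan is to combine the classification of the surfaces in question with a spreading-out argument and an infinitesimal Hodge-theoretic obstruction, exploiting the elliptic fibration to sharpen the bound. First I would record the numerical shape of $S$. Since $p_g=q=0$ we have $\chi(\mathcal{O}_S)=1$, and because $q=0$ the base of the elliptic fibration $\pi\colon S\to B$ is forced to be $B=\mathbb{P}^1$. The canonical bundle formula then gives, numerically, $K_S\equiv aF$, where $F$ is the class of a fibre and $a=-1+\sum_i(1-1/m_i)$ with the $m_i$ the multiplicities of the multiple fibres; Kodaira dimension $1$ forces $a>0$, so these are Dolgachev-type surfaces, and the hypothesis that $\mathrm{Pic}(S)$ is torsion free rules out the torsion line bundles attached to non-coprime multiplicities, so that the relevant divisor classes are determined by their numerical type. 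In particular $a$ is small and positive (e.g. $a=\tfrac16$ in the $(2,3)$ case), which will keep $K_S\cdot D$ small.

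Second, I would pass from a single immersion to a family. Arguing as in Theorem~\ref{immersion}, the elliptic surfaces $S$ with the stated invariants vary in countably many bounded families, and a birational immersion $\kappa$ confines its image $D=\kappa(C)$ to a bounded Hilbert scheme; hence if a very general $C$ admitted such a $\kappa$, then after base change there would be a family $\kappa_t\colon C_t\to S_t$ over a base $T$ dominating the space $U$ of smooth plane curves of degree $d$ (modulo $\mathrm{PGL}_3$). Composing with the fibration produces a pencil $f_t=\pi_t\circ\kappa_t\colon C_t\to\mathbb{P}^1$ of degree $n=D\cdot F$, that is the line bundle $L=\kappa^*\mathcal{O}_S(F)$ with $h^0(L)\ge 2$. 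Since $C_t$ is a very general plane curve, its pencils of small degree are cut by the plane geometry (the gonality is $d-1$, realized by projection from a point of $C_t$), which pins $L$ down in terms of the hyperplane class; combined with adjunction $2p_a(D)-2=D^2+an$ and $p_a(D)\ge g=\binom{d-1}{2}$, this fixes $D^2$ close to $2g-2$.

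Third, and this is the crux, I would differentiate the family and extract a contradiction from $p_g=q=0$. A first-order deformation of $C_t$ as a plane curve is a Kodaira–Spencer class $\xi\in H^1(C,T_C)$, and its compatibility with $\kappa_t$ and $S_t$ is read off the conormal sequence
\[
0\longrightarrow N^\vee\longrightarrow \kappa^*\Omega^1_S\longrightarrow \omega_C\longrightarrow 0 .
\]
Feeding pairs of canonical forms through the adjoint-image/second-fundamental-form construction in the spirit of \cite{gp}, the deformations realizable by moving the immersion land in a subspace governed by $H^0(S,\Omega^1_S)$ and $H^0(S,\Omega^2_S)$, both of which vanish because $q=0$ and $p_g=0$. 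Thus the adjoint images of the forms in $H^0(C,\omega_C)=H^0(\mathbb{P}^2,\mathcal{O}(d-3))$ are forced into a space too small to absorb the full variation of $C$ in $\mathbb{P}^2$. As the infinitesimal variation of Hodge structure of a very general plane curve is maximally nondegenerate, this becomes impossible once the relevant numerical margin is positive, which is precisely the threshold $d\ge 6$; the family of the previous step then cannot dominate $U$, contradicting very-generality.

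The hard part will be making the third step quantitative: one must verify that the adjoint/IVHS obstruction is genuinely nonzero, that its vanishing locus is a proper subvariety of $U$, and that the resulting inequality descends to the sharp bound $d\ge 6$ rather than the weaker $d\ge 9$ of Theorem~\ref{immersion}. Here the elliptic structure is what pays off: the smallness of $a$, hence of $K_S\cdot D$, holds $D^2$ near $2g-2$ and tightens the count, while the torsion-free hypothesis on $\mathrm{Pic}(S)$ blocks the only apparent escape, namely evading the line-bundle rigidity of $C$ by passing to twists or étale covers.
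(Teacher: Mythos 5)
Your first two steps (the numerical setup of $S$ and the spreading-out over the moduli of plane curves) match the paper's strategy, but your third step --- the claimed crux --- is a genuine gap. You propose to get the contradiction from an adjoint-image/IVHS argument in the style of \cite{gp}, but you give no computation, and the paper explicitly moves away from that method because even for surfaces of general type it only handled low degrees; there is no reason offered that it would yield the sharp threshold $d\ge 6$ here. What the paper actually does is elementary deformation counting: Clifford's theorem bounds the dimension of the Kuranishi space of $\kappa$ by $g-\tfrac12\deg\kappa^\ast(K_S)$ (Proposition~\ref{cliff}), the Hurwitz formula applied to $\pi\circ\kappa$ bounds the number $N$ of multiple fibres by $d-2$ (Lemma~\ref{elliptic-1}), and the computation $h^0(\Omega^1_S(nF))=n-1$ gives the moduli bound $M(S)\le 8+N$ (Proposition~\ref{moduli-elliptic}); these combine into $3d<17+N-\rho/2$. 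None of these three inputs appears in your proposal, and without them the dimension count cannot close.

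You also misidentify how the torsion-freeness of ${\rm Pic}(S)$ and the size of $K_S$ enter. You assert that $a=K_S/F$ is ``small and positive,'' which will ``keep $K_S\cdot D$ small''; but the proof needs $\deg\kappa^\ast(K_S)$ to be \emph{large} when $N>2$, since a larger pullback degree shrinks the Clifford bound. Torsion-freeness forces the multiplicities $k_i$ to be pairwise coprime, so $F=M\lambda$ with $M=\prod k_i$ and $K_S=\rho\lambda$ with $\rho>6N$ for $N>2$ (Proposition~\ref{ro}); since a multiple of $\lambda$ moves, $\deg\kappa^\ast(K_S)\ge\rho$, which kills all cases except $N=2$, and then all of those except $d=6$, $(k_1,k_2)=(2,3)$, $\rho=1$. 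That boundary case needs a separate plane-geometry argument (the degree-$6$ pencil must be projection from a point, forcing $\kappa^\ast(K_S)=\cO_C(p)$ with $p$ a flex of maximal order $6$, impossible for a very general sextic), which your sketch does not anticipate. As written, the proposal does not establish the statement.
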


The method of proof combines the classification theory of algebraic surfaces, Hodge theory and deformation theory.
By Hodge theory (as Section 3.5 in \cite{gp})  one has to consider only dominant rational maps $f: X\dashrightarrow Y$
where $Y$ is simply connected and without  $2-$holomorphic global forms, that is $p_g(Y)=0.$
From the classification theory one obtains that the Kodaira dimension of $Y$, denoted by ${\rm kod}(Y)$, must be $\geq 1.$
The case of surfaces of general type, that is ${\rm kod}(Y)=2,$ was also considered in \cite{gp}, where the problem was solved only for $d\leq 11.$
The new idea here is to consider the full family of smooth plane curves which are hyperplane sections
of surfaces $X\subset \bP^3.$ We show that a very general plane curve, possibly up to some small degree cases,
cannot be birationally immersed in $Y.$
The case of elliptic surfaces, that is when ${\rm kod}(Y)=1,$ is similar, but slightly more difficult.  A careful study of curves on $Y$ as well as an estimate of the moduli of non-rational elliptic surfaces is necessary.

\medskip

The method presented here can be used to obtain similar results for a very general point of families of surfaces of general type that contain families of curves of high dimension. Such are  for instance symmetric products of curves of genus $g\geq 4$ and products of curves (see \cite{BP}). In the paper \cite{LeeP}, we treat a dominant rational map from a product of curves. It is not clear at the moment how to treat the cases of ${\rm kod}(X)= 1$ and ${\rm kod}(X)=0.$ For instance the case where $X$ is a very general quartic surface in $\bP^3$ needs a different approach.

\medskip

In this paper we work over the field of complex numbers.

\subsection*{Acknowledgements}
This work was initiated when the first named author visited University of Pavia supported by INdAM (GNSAGA). He would like to thank University of Pavia for its hospitality during his visit.
The first named author is partially supported by the National Research Foundation of Korea(NRF) funded by the Korea government(MSIP) (No.2013006431) and (No.2013042157).
The second named author is partially supported by INdAM (GNSAGA); PRIN 2012 \emph{``Moduli, strutture geometriche e loro applicazioni''} and FAR 2013 (PV) \emph{``Variet\`a algebriche, calcolo algebrico, grafi orientati e topologici''}. Finally we thank the referees for several useful suggestions and remarks.

\medskip

\section{Family of curves on surfaces}

We will consider birational immersions of curves into surfaces. We begin with the following:
\begin{definition} \label{bim}
Let $C$ be a smooth curve and let $S$ be a smooth projective surface. A morphism $\kappa: C\to S$ will be called a birational immersion if the induced map $C\to\kappa(C)$ is birational.
\end{definition}
Our aim is to show that a very general plane curve $C$ of degree $d\geq 9$ cannot be birationally immersed  into any projective surface  $S$ with $p_g(S)=q(S)=0$ and  Kodaira dimension ${\rm kod}(S)\geq 0.$ For this we  will compare deformations of a plane curve $C$ of degree $d$ and deformations of the map $\kappa.$
\bigskip

We now explain our method starting with some general remarks. Firstly we recall that we can find a countable number of families that contain all the algebraic (smooth) projective varieties.
This  follows, for instance, from the fact that the Hilbert polynomials are countable
and that any Hilbert scheme has a finite number of irreducible components.
By the same kind of argument we can find a countable number of families for all the birational immersions
$\kappa: C\to S,$ where $C$ is a smooth plane curve of degree $d,$  $S$ is any smooth projective surface with $p_g(S)=q(S)=0$ and  Kodaira dimension ${\rm kod}(S)\geq0.$

By contradiction we assume that for a very general smooth plane curve of degree $d,$ $C,$ a birational immersion  $\kappa: C\to S$ exists.
A Baire's category argument shows that there should exist a family of deformations of the birational immersion $\kappa$  that dominates the moduli space of the plane curves of degree $d$.
Therefore, there should be two  smooth families  $\pi: \cC \to W,$   $p: \cS\to W$ and a family map $K: \cC\to \cS,$ where $W$ is smooth variety,   $u\in W$ such that $\pi^{-1}(u)\cong C,$  $p^{-1}(u)\cong S$ and  $K_u\cong \kappa.$
Let $M(d)$  be the moduli space of smooth plane curves of degree $d$. Under our assumption the forgetful map:
$W\to M(d)$ $$x\to moduli[\pi^{-1}(x)]$$ must contain an open set of $M(d).$
Then we can compare the dimension of  $W$  with the dimension $n$ of the vector space of the first order deformations of the map $\kappa.$
We must have:
$$n\geq \dim M(d)= \frac{(d+1)(d+2)}{2}-9.$$
   Next we give a bound $n\leq n'+m'$ where $n'$ is the dimension of the vector space of the  first order deformation of the map $\kappa:C\to S$ where the target surface $S$ is fixed, and $m'=h^1(T_S)$ is the space of the first order deformation of the complex structure of $S.$ Finally if we can prove that
$$m'+n'<\frac{(d+1)(d+2)}{2}-9$$ then we will achieve our contradiction.

In this section we will show this for surfaces $S$ of general type and for Enriques surfaces. In the next section we will adapt the argument to the case of surfaces of Kodaira dimension ${\rm kod}(S)=1$.
\bigskip

We recall a basic result on deformations of curves on a fixed surface (see for instance \cite{ac}). Also we refer to \cite{cata} for a basic result on moduli of algebraic surfaces.
Let $C$ be a smooth projective curve of genus $g(C)\ge 2$ and let $S$ be a smooth projective surface. Let $\kappa:C\to S$ a birational immersion.
Let $T_C$ and $T_S$ be the holomorphic tangent bundles.
 The differential $d\kappa :T_C\to \kappa^\ast T_S$ of a birational immersion $\kappa$ is a sheaf inclusion and hence induces an exact sequence on $C$:
\[ 0\to T_C\to \kappa^\ast T_S\to N\to 0.\]
The first order deformations of $\kappa$ are classified by the global sections $H^0(C,N)$ of the normal sheaf $N.$
Let $U$ be a Kuranishi space of deformations of $\kappa$. When $\kappa$ is not rigid, that
is $\dim(U)>0$, we have a good bound on the dimension of $U$: Replace $\kappa: C\to S$ by a very general point of $U$ if it is necessary.
Let $N_{tors}\subset N$ be the torsion sheaf of $N$ and set $N'=N/N_{tors}.$ Since $\kappa$ is a very general birational immersion,
a simple\footnote{Ciro Ciliberto explains in this way: in an actual deformation of a subvariety its general point $p$ must move, therefore the section of the normal corresponding to the first order deformation cannot vanish at $p$.}, but basic result  (see \cite{ac} Corollary (6.11), and \cite{ACG} Chapter XXI ) gives that $\dim U\leq h^0(C,N').$
When $\deg \kappa^\ast(K_S)=m\geq 0$ we have  $\deg N'\leq 2g-2-m$ where $g=g(C)$. Therefore Clifford theorem gives
\[h^0(N')\leq g-\frac{1}{2}m.\]
We have:

\begin{proposition} \label{cliff} Assume that $K_S$ is a nef divisor. Then \\
  $\dim U\leq \max(0,g-\frac{\deg \kappa^\ast(K_S)}{2}).$
 Moreover if $\dim U=g-\frac{\deg \kappa^\ast(K_S)}{2},$ then $N=N'$ and additionally either $\kappa^\ast(K_S)=\cO_C$ or  $C$ is  hyperelliptic.
\end{proposition}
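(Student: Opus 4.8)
The plan is to turn the exact sequence $0\to T_C\to\kappa^\ast T_S\to N\to 0$ into a numerical estimate and then read the equality case off Clifford's theorem. First I would compute degrees. Since $\det T_S\cong K_S^{-1}$ we have $\deg\kappa^\ast T_S=-m$, where $m=\deg\kappa^\ast(K_S)$, while $\deg T_C=2-2g$; additivity of the degree in the sequence then gives $\deg N=2g-2-m$, and therefore $\deg N'=2g-2-m-\mathrm{length}(N_{tors})\le 2g-2-m$, with equality precisely when $N=N'$. The nef hypothesis on $K_S$ is used exactly once, to ensure $m=K_S\cdot\kappa(C)\ge 0$ (here $\kappa$ is birational onto its image), which is what makes the Clifford estimate effective.

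Next I would combine $\dim U\le h^0(N')$ with the two standard estimates for a line bundle on a curve. If $N'$ is non-special, Riemann--Roch gives $h^0(N')=\deg N'+1-g\le g-1-m$; if $N'$ is special, Clifford's inequality gives $h^0(N')\le\frac12\deg N'+1\le g-\frac{m}{2}$. As $m\ge 0$ one has $g-1-m< g-\frac{m}{2}$, so in all cases $h^0(N')\le g-\frac{m}{2}$ when $m<2g$, while $\deg N'<0$ forces $h^0(N')=0$ when $m\ge 2g$. This yields $\dim U\le\max\left(0,\,g-\frac{\deg\kappa^\ast(K_S)}{2}\right)$.

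For the equality statement I would suppose $\dim U=g-\frac{m}{2}>0$ and track the inequalities. Because the non-special estimate is strict, $N'$ must be special; because Clifford's bound is then sharp, equality must hold in Clifford's theorem for $N'$; and because $\frac12\deg N'+1=g-\frac{m}{2}$ forces $\deg N'=2g-2-m$, the torsion vanishes and $N=N'$, which is the first assertion. Taking determinants in the (now locally free) sequence identifies $N'\cong K_C\otimes\kappa^\ast(K_S)^{-1}$. I would then invoke the classification of the equality case of Clifford's theorem (\cite{ACG}, Ch.\ III): a special line bundle meeting the bound is $\cO_C$, or $K_C$, or a multiple of the $g^1_2$ on a hyperelliptic curve. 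The case $N'\cong K_C$ gives $\kappa^\ast(K_S)\cong\cO_C$, and the hyperelliptic case gives the second alternative.

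The step I expect to be the main obstacle is ruling out the residual Clifford-equality case $N'\cong\cO_C$, that is $\kappa^\ast(K_S)\cong K_C$, which matches neither stated alternative. Geometrically it says that $\kappa(C)$ has trivial normal sheaf and so moves in a base-point-free pencil with $\kappa(C)^2=0$, i.e.\ $\kappa$ is birationally a fibre of a genus-$g$ fibration of $S$. To close the argument I would have to exclude this configuration for the birational immersions at hand---either by using the very general choice of $\kappa$ together with the nef hypothesis, or by showing that such a pencil forces $C$ to be hyperelliptic---so that only $\kappa^\ast(K_S)\cong\cO_C$ and the hyperelliptic case remain. The rest is routine degree bookkeeping together with a direct appeal to Clifford's theorem and its equality classification.
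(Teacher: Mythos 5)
Your argument is essentially the paper's: the authors likewise bound $\dim U\leq h^0(N')\leq h^0(K_C\otimes \kappa^\ast K_S^{-1})$ and then invoke Clifford's theorem, though their proof is a single line and you supply the details (the degree computation, the special/non-special dichotomy, and the torsion argument forcing $N=N'$ in the equality case) that they leave implicit. So on the main inequality and on $N=N'$ you are in complete agreement with the paper, just more explicit.

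The ``main obstacle'' you flag is real, but it is a defect of the proposition as stated rather than of your argument: the equality case of Clifford's theorem does admit the third branch $N'\cong\cO_C$, i.e.\ $\kappa^\ast K_S\cong K_C$, which occurs for instance when $\kappa(C)$ is a fibre of a genus-$g$ fibration of $S$ and is not excluded by the nefness of $K_S$ or by the generality of $\kappa$; the paper's one-line proof does not address it either. You should note, however, that in this branch $\deg\kappa^\ast K_S=2g-2$, so $g-\frac{1}{2}\deg\kappa^\ast K_S=1$ and the equality case only omits configurations with $\dim U=1$. Every application of the proposition in the paper (Propositions~\ref{gen}, \ref{Enriques}, \ref{plane-elliptic} and Theorem~\ref{elliptic}) uses it to contradict a lower bound $\dim U\geq \frac{(d+1)(d+2)}{2}-9-M(S)$ which is much larger than $1$, so the residual case is harmless there. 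Rather than trying to exclude it outright (you cannot, in general), the clean fix is to add $\kappa^\ast(K_S)\cong K_C$ as a third alternative in the statement, or to observe that it forces $\dim U=1$; with that amendment your proof is complete and is the intended one.
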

\begin{proof} If $\dim(U)=0$ then there is nothing to be proved. So we may assume $\dim(U)>0$ then $\dim U\leq h^0(N')\leq  h^0(K_C\otimes K_{S}^{-1}).$ Since $K_S$ is nef the statement follows from  Clifford theorem.\end{proof}

We give an immediate application of the above result:

\begin{proposition} \label{gen} If $C$ is a very general smooth plane curve of degree $d\geq 9$ then there is no birational immersion
$\kappa$ from $C$ into any surface $S$ of general type with $p_g=q=0.$  \end{proposition}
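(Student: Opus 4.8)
The plan is to feed the genus of a plane curve into the deformation inequality set up above. First I would reduce to the case that $S$ is minimal: composing $\kappa$ with the contraction $\sigma\colon S\to S_0$ onto the minimal model gives a morphism $\sigma\circ\kappa\colon C\to S_0$, and since a very general member $\kappa(C)$ of a moving family cannot be one of the finitely many $(-1)$-curves contracted by $\sigma$, this is again a birational immersion; as $p_g,q,\chi(\cO)$ are birational invariants we may assume $K_S$ is nef and big with $\chi(\cO_S)=1$ and $1\le K_S^2\le 9$. Writing $g=\frac{(d-1)(d-2)}{2}$ for the genus of $C$, a direct computation gives
\[ \dim M(d)-g=\frac{(d+1)(d+2)}{2}-9-\frac{(d-1)(d-2)}{2}=3(d-3). \]
By the framework above, a family dominating $M(d)$ forces $n'+m'\ge \dim M(d)$, while Proposition~\ref{cliff} gives $n'\le g-\frac12\deg\kappa^\ast(K_S)$. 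Hence it suffices to produce the contradiction $n'+m'<\dim M(d)$, i.e. to show $m'<3(d-3)+\frac12\deg\kappa^\ast(K_S)$.

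Set $m=\deg\kappa^\ast(K_S)=K_S\cdot\kappa(C)\ge 0$. If $m=0$, then $K_S\cdot\kappa(C)=0$ with $K_S$ nef and big forces $\kappa(C)^2<0$ by the Hodge index theorem (the borderline $\kappa(C)^2=0$ would make $\kappa(C)$ numerically proportional to $K_S$, hence numerically trivial, absurd for an effective nonzero curve); the irreducible curve $\kappa(C)$ is then rigid, so its image cannot move and $W$ cannot dominate $M(d)$, as $n'+m'\le m'\ll\dim M(d)$. If $m\ge 1$, then $n'+m'\le g-\frac12 m+m'$, so it suffices to prove the single numerical bound $m'\le 3(d-3)$ in order to obtain $n'+m'\le \dim M(d)-\frac12 m<\dim M(d)$. (Here the fact that a smooth plane curve of degree $\ge 4$ is non-hyperelliptic is exactly what excludes the equality case of Proposition~\ref{cliff}, where one would have $\kappa^\ast K_S=\cO_C$ and $m=0$.) In both cases the problem reduces to $m'\le 3(d-3)$, with $d=9$ the extremal instance, requiring $m'\le 18$.

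Finally I would bound $m'$, the number of moduli of the target, which is at most $h^1(S,T_S)$. Since $S$ is of general type, $H^0(S,T_S)=0$, and Riemann--Roch gives $\chi(T_S)=2K_S^2-10\chi(\cO_S)=2K_S^2-10$, whence $m'\le h^1(S,T_S)=h^2(S,T_S)+10-2K_S^2$. By Serre duality $h^2(S,T_S)=h^0(S,\Omega^1_S(K_S))$, so everything comes down to bounding the global sections of $\Omega^1_S(K_S)$, and this is the step I expect to be the main obstacle. For $S$ minimal of general type $\Omega^1_S$ is $K_S$-semistable, so $\Omega^1_S(K_S)$ is a $K_S$-semistable rank-two bundle with Chern numbers of size $O(K_S^2)$; a Bogomolov-type estimate for the sections of a semistable bundle, together with $1\le K_S^2\le 9$ and the boundedness (in the sense of Gieseker) of the finitely many deformation types with $\chi(\cO_S)=1$, should yield $h^0(\Omega^1_S(K_S))\le 8+2K_S^2$, equivalently $m'\le h^1(S,T_S)\le 18\le 3(d-3)$ for $d\ge 9$. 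This contradicts the inequality forced by domination of $M(d)$ and proves the proposition. The genuinely delicate point, on which I would spend the most effort, is to make this section bound explicit enough to cover precisely $d=9$; every larger degree is automatic, since the gap $3(d-3)$ grows linearly in $d$ while the moduli bound stays a fixed constant.
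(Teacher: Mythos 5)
Your overall skeleton coincides with the paper's: reduce to $S$ minimal, compare $\dim M(d)$ with $n'+m'$, bound $n'$ via Proposition~\ref{cliff}, and finish with a numerical estimate on $m'$. Your reduction to the minimal model, your treatment of the case $\deg\kappa^\ast K_S=0$ by the Hodge index theorem, and the arithmetic $\dim M(d)-g=3(d-3)$ are all correct, and in places more careful than the paper, which simply assumes minimality and implicitly uses $\deg\kappa^\ast K_S\ge 2$ (which does hold: if $K_S\cdot\kappa(C)=1$ the index theorem and adjunction would force $p_a(\kappa(C))=2$, impossible for a curve birational to a plane curve of degree $\ge 7$).

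The gap is exactly where you flag it: the bound $h^0(S,\Omega^1_S(K_S))\le 8+2K_S^2$, equivalently $m'\le 18$, is asserted but not proved. Saying that a Bogomolov-type estimate for sections of a $K_S$-semistable rank-two bundle \emph{should yield} this is not an argument: semistability of $\Omega^1_S$ for minimal surfaces of general type is itself a nontrivial theorem, and extracting from it an effective section bound with precisely the constant $8+2K_S^2$ --- which you need essentially on the nose for $d=9$, since the gap $3(d-3)=18$ leaves no slack --- is the entire difficulty, not a routine afterthought. The paper does not attempt this computation at all: it quotes Corollary 2.5.3 of \cite{gp}, which states that surfaces of general type with $p_g=q=0$ depend on at most $19$ moduli, and $19$ suffices because the inequality of Proposition~\ref{cliff} is strict (the plane curve is non-hyperelliptic and $\kappa^\ast K_S\ne\cO_C$) and $\deg\kappa^\ast K_S\ge 2$, giving $3d<28-\tfrac{1}{2}\deg\kappa^\ast K_S\le 27$ and hence $d\le 8$. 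So either cite that result or actually establish your section bound; as written the proof does not close, and the one step you defer is the only step with real content.
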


\begin{proof}
 We can assume that $S$ is minimal. The proof is obtained by contradiction: suppose that a very general plane curve $C$ can be birationally immersed in $S$
with $p_g=q=0.$
Plane curves of degree $d$ depend on
 \[ \frac {(d+1)(d+2)}{2}-9\]
 dimensional moduli.
By (\cite{gp}, Corollary 2.5.3) surfaces of general type with $p_g=q=0$ depend on $M\leq 19$
 parameters. It follows that on $S$ we can find at least
 \[ \frac {(d+1)(d+2)}{2}-28\] dimensional family of birational immersion of plane curves of degree $d$. Since this number is positive if $d\ge 7$, and
since $C$ is not hyperelliptic we have then by Proposition~\ref{cliff}
  \[ \frac {(d+1)(d+2)}{2}-28<g-\frac{\deg \kappa^\ast(K_S)}{2}.\]
  if $d\ge 7$. Therefore
  \[\frac {(d+1)(d+2)}{2}-\frac {(d-1)(d-2)}{2}<28-\frac{\deg \kappa^\ast(K_S)}{2}:\]
 \[ 3d< 28-\frac{\deg \kappa^\ast(K_S)}{2}\le 27\]
\noindent since $K_S$ is nef and big.
 Therefore, if $d\geq 9$ then there is no birational immersion $\kappa: C\to S.$ \end{proof}

By the same argument in Proposition~\ref{gen}, one can treat also the case when $S$ is any Enriques surface. We note that the dimension of moduli space of Enriques surfaces is 10.

\begin{proposition} \label{Enriques} If $C$ is a very general smooth plane curve of degree $d\geq 7$ then there is no birational immersion
$\kappa$ from $C$ into any Enriques surface $S$. \end{proposition}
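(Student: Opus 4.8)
The plan is to mirror the dimension-counting argument of Proposition~\ref{gen} almost verbatim, substituting the relevant invariants of an Enriques surface. An Enriques surface $S$ has $p_g(S)=q(S)=0$ by definition, so the Hodge-theoretic reductions apply and we may run the same comparison. The key numerical inputs are that the moduli space of Enriques surfaces has dimension $10$ (replacing the bound $M\le 19$ used for surfaces of general type), and that the canonical class $K_S$ is numerically trivial, being $2$-torsion in $\mathrm{Pic}(S)$.

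Let me see how the count works. First I would observe that plane curves of degree $d$ depend on $\frac{(d+1)(d+2)}{2}-9$ moduli, exactly as before. Under the contradiction hypothesis that a very general plane curve $C$ of degree $d$ admits a birational immersion into some Enriques surface, and since Enriques surfaces vary in a $10$-dimensional family, I would conclude that on a fixed $S$ we find a family of birational immersions of dimension at least
\[
\frac{(d+1)(d+2)}{2}-9-10 = \frac{(d+1)(d+2)}{2}-19.
\]
Thus $\dim U \ge \frac{(d+1)(d+2)}{2}-19$. Now the crucial simplification: since $K_S$ is numerically trivial, $\deg\kappa^\ast(K_S)=0$, and $K_S$ is certainly nef, so Proposition~\ref{cliff} applies and yields $\dim U \le g = \frac{(d-1)(d-2)}{2}$. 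Combining the two inequalities gives
\[
\frac{(d+1)(d+2)}{2}-19 \le \frac{(d-1)(d-2)}{2},
\]
that is $3d \le 19$, which already fails for $d\ge 7$. Hence no such birational immersion exists when $d\ge 7$.

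One point to handle carefully, rather than a genuine obstacle, is the equality case in Proposition~\ref{cliff}: since $\deg\kappa^\ast(K_S)=0$ the proposition permits $\dim U = g$ (with $\kappa^\ast K_S=\cO_C$ being automatic here). But because the strict inequality $3d>19$ holds for $d\ge 7$, the estimate $\dim U\le g$ is already incompatible with $\dim U\ge \frac{(d+1)(d+2)}{2}-19$, so the equality case causes no trouble and I need not invoke the hyperelliptic alternative. The main thing to get right is therefore just the bookkeeping of the $10$-dimensional Enriques moduli against the Clifford bound; the argument is genuinely easier than the general-type case precisely because $K_S$ contributes nothing to the degree count.
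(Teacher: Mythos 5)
Your proof is correct and is essentially the paper's own argument: the paper proves Proposition~\ref{Enriques} by running the dimension count of Proposition~\ref{gen} with the $10$-dimensional Enriques moduli in place of the bound $19$, and with $\deg\kappa^\ast(K_S)=0$ since $K_S$ is torsion. Your additional care with the equality case of Proposition~\ref{cliff} (noting that the non-strict bound $\dim U\le g$ already forces $3d\le 19$, so $d\ge 7$ gives the contradiction) is a sound way to sidestep the hyperelliptic/trivial-canonical alternative.
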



\section{Elliptic surfaces with $p_g=q=0$ of Kodaira dimension 1}
In this section, we study a birational immersion from a very general smooth plane curve to an elliptic smooth projective surface with $p_g=q=0$ of Kodaira dimension $1$. We recall that
 a  surface $S$ (see \cite{bpv})
is an elliptic surface if it admits an elliptic fibration, i.e. there is a smooth projective curve $B$ and
a surjective map \[\pi: S\to B\] such that the general fiber $F=\pi^{-1}(b)$ for $b\in B$ is an elliptic
curve.

We will assume moreover that
\begin{enumerate}
\item $p_g(S)= \dim H^0(S,K_S)=0$.
\item $q(S)= \dim H^1(S, \cO_S)=0$.
\item $S$ is minimal.
\end{enumerate}

Easy consequence of the above conditions, we have $\chi(\cO_S)=1,$ $K^2=0$, $c_2(S)=12$, and since the irregualarity is zero $B=\bP^1$. Let $\pi: S\to\bP^1$ be the elliptic fibration,  and let $N$ be the number of multiple fibers of $\pi$,
and let $k_i$ for $i=1, \ldots, N$ be  their multiplicities.

We first prove the following:
\begin{lemma} \label{elliptic-1}
Let $\pi:S\to \bP^1$ be an irrational minimal elliptic surface with $p_g=q=0$
and let $N$ be the number of multiple fibers  of $\pi$. Suppose that there is a birational immersion $\kappa$ from a very general smooth plane curve $C$ of degree $d\ge 4$ to $S$. Then we have $N\leq d-2.$
\end{lemma}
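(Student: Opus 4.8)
The plan is to exploit the structure of multiple fibers on $S$ together with the linear system on $C$ induced by the elliptic fibration, and to derive the bound $N \leq d-2$ from a genus/Clifford-type estimate. Since $C$ is very general of degree $d$, any birational immersion $\kappa: C \to S$ is itself very general in its deformation family, and the key geometric input is the canonical bundle formula for an elliptic surface with $p_g=q=0$. Explicitly, because $\chi(\cO_S)=1$, $q=0$, and $B=\bP^1$, the canonical formula reads
\[ K_S = \pi^\ast(K_{\bP^1} \otimes L) + \sum_{i=1}^N (k_i-1)F_i, \]
where $F_i$ are the multiple fibers with multiplicities $k_i$ and $\deg L = \chi(\cO_S)=1$; thus $K_S$ is numerically equivalent to $\left(-2+1+\sum_{i=1}^N \frac{k_i-1}{k_i}\right)F = \left(-1 + \sum_{i=1}^N \frac{k_i-1}{k_i}\right)F$ as a $\bQ$-divisor, where $F$ is a general fiber. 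This already forces $N \geq 2$ for $\mathrm{kod}(S)=1$, and more importantly gives a clean way to compute intersection numbers.

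First I would pull back the fibration to $C$: composing $\kappa$ with $\pi$ gives a morphism $\pi\circ\kappa: C \to \bP^1$, and I would study the degree of this map, i.e. $\kappa^\ast F \cdot$ as a divisor class on $C$, equivalently $\deg \kappa^\ast \cO_S(F) = \kappa(C)\cdot F$ computed on $S$. Because $C$ is a plane curve of degree $d$, its image generates a one-parameter family as we vary the hyperplane section, and the deformation count from the previous section constrains how the image can meet the fibers. The crucial observation is that each multiple fiber $F_i$ contributes to $\kappa^\ast K_S$: since $\kappa(C)$ must meet a general fiber and hence the multiple fibers, I would estimate $\deg \kappa^\ast(K_S)$ from below in terms of $N$ and the multiplicities $k_i$, using that $\kappa(C) \cdot F_i \geq 1$ together with the fractional coefficients $(k_i-1)/k_i \geq 1/2$.

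Next I would feed this into the deformation bound. Applying the method of the previous section (Proposition~\ref{cliff} and the surrounding discussion), since a very general plane curve of degree $d$ must deform in a family of dimension at least $\frac{(d+1)(d+2)}{2}-9$ minus the moduli of $S$, and since $\dim U \leq g - \frac{1}{2}\deg\kappa^\ast(K_S)$ when $K_S$ is nef (which holds for a minimal elliptic surface of $\mathrm{kod}=1$), I would combine $g = \frac{(d-1)(d-2)}{2}$ with the lower bound on $\deg\kappa^\ast(K_S)$ coming from the multiple fibers. Setting the deformation-theoretic inequality against the fiber-contribution estimate should yield a linear inequality in $d$ and $N$; isolating $N$ gives precisely $N \leq d-2$. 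The arithmetic must be arranged so that the multiple-fiber contribution $\sum (k_i-1)\,(\kappa(C)\cdot F_i)$ dominates, which is why the fractional coefficients and the fact that $\kappa(C)$ meets every multiple fiber are both essential.

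The main obstacle I anticipate is controlling $\deg \kappa^\ast(K_S)$ precisely enough: the naive bound $\kappa(C)\cdot F_i \geq 1$ may be too weak, and I expect one must argue that the induced map $C \to \bP^1$ has degree at least the number of sheets forced by the plane-curve geometry, so that $\kappa(C)$ actually meets each fiber, and each multiple fiber, with controlled multiplicity. A second subtlety is ensuring $K_S$ is genuinely nef (not merely numerically effective up to the fractional correction) so that Clifford's theorem applies in the form used in Proposition~\ref{cliff}; for a minimal properly elliptic surface this follows from $K_S$ being a nonnegative $\bQ$-combination of fibers, but the torsion-free or integrality issues around the multiple fibers will need care. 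Once the intersection estimate $\deg\kappa^\ast(K_S) \geq \sum_{i=1}^N (k_i-1) \geq N$ (or a suitably sharpened version) is in hand, the inequality $\frac{(d+1)(d+2)}{2} - M \leq g - \frac{1}{2}\deg\kappa^\ast(K_S)$ rearranges directly into the claimed bound.
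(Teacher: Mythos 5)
There is a genuine gap, and it is structural rather than a matter of arithmetic. Your plan is to run the deformation count of Proposition~\ref{cliff} against the moduli of plane curves, absorbing the moduli of $S$ into a constant $M$. But for elliptic surfaces with $p_g=q=0$ the number of moduli is \emph{not} bounded independently of $N$: the paper's own estimate (Proposition~\ref{moduli-elliptic}, proved after this lemma) is $M(S)\leq 8+N$, and this order of growth is real for Dolgachev surfaces. Consequently the inequality $\frac{(d+1)(d+2)}{2}-9-M(S)\leq \max\bigl(0,\,g-\frac{1}{2}\deg\kappa^\ast(K_S)\bigr)$ cannot bound $N$: when $N$ is large the left-hand side can be non-positive (the curve may be rigid in each $S$, with the moduli of $S$ alone accounting for the moduli of plane curves), and no contradiction arises. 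This is precisely why Lemma~\ref{elliptic-1} is needed as an input to the deformation count in Proposition~\ref{plane-elliptic} and Theorem~\ref{elliptic}; you cannot derive it from that count. A secondary error: your proposed lower bound $\deg\kappa^\ast(K_S)\geq\sum(k_i-1)\geq N$ is false. By the canonical bundle formula $\deg\kappa^\ast(K_S)=\alpha\bigl(N-1-\sum 1/k_i\bigr)$ where $\alpha=\kappa(C)\cdot F$; for $N=2$, $(k_1,k_2)=(2,3)$ this equals $\alpha/6$, which need not exceed $3$.

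The paper's proof uses a different mechanism that sidesteps the moduli of $S$ entirely: it composes $\kappa$ with the fibration to get $f=\pi\circ\kappa:C\to\bP^1$ of degree $\alpha\geq d-1$ (the gonality of a smooth plane curve), and compares the moduli of plane curves not with deformations of $\kappa(C)$ inside $S$ but with the Hurwitz scheme of degree-$\alpha$ covers of $\bP^1$. Since every point of $C$ over a multiple fiber $F_i$ ramifies with index divisible by $k_i$, each of the $N$ multiple fibers is a branch point contributing at least $\alpha(1-1/k_i)\geq\alpha/2$ to the ramification; if $r$ denotes the remaining branch points, the Hurwitz count forces $N+r-3\geq\frac{(d+1)(d+2)}{2}-9$, while Riemann--Hurwitz gives $2g-2\geq-2\alpha+N\alpha/2+r$. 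Eliminating $r$ and using $\alpha\geq d-1$ yields $(d-3)(d-6)\geq(d-3)(N-4)$, hence $N\leq d-2$. The key ideas you are missing are the passage to the base $\bP^1$ (whose ``moduli'' are zero) and the use of Riemann--Hurwitz to convert the large forced ramification over multiple fibers into an upper bound on $N$.
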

\begin{proof} We consider the map $f=\pi\circ \kappa: C\to \bP^1.$ Let $\alpha$ be the degree of $f$.
Let $g$ be the genus of $C$. Then from the Hurwitz formula we have
\[ 2g-2\ge -2\alpha+ (\sum^N_{i=1} \alpha(1-1/k_i))+r\geq -2\alpha+N\frac{\alpha}{2}+r \]
where $r$ is the number of branch points which are not in any multiple fiber. Note that the ramification index corresponding to the points in a multiple fiber
is $\geq \frac{\alpha}{2}.$
Clearly $\alpha\geq d-1$, the gonality of the smooth plane curve $C$ (see for instance Chapter I in \cite{ACGH}).

We note that if we fix the number $N$ of the multiple fibers then there are countably many deformation types of irrational minimal elliptic surfaces with $p_g=q=0$
[Theorem 7.6, Chapter I in \cite{FM}]. Therefore the Baire's category argument applies and we can compare the dimension of moduli of plane curves with the dimension of the Hurwitz' scheme defined by $f.$
 Since very general plane curves are obtained by $\alpha$-covers of $\bP^1$, we must have in fact
\[ N+r-3\geq (d+1)(d+2)/2-9.\]
This implies $r\geq (d+1)(d+2)/2-N-6$. Then combining it with the above equation, we conclude
\[ d(d-3)\geq (d-1)(N/2-2)+(d+1)(d+2)/2-N-6.\]
So $d^2-9d+18\geq (d-3)(N-4).$ Since we have $d\ge 4$ it follows $N\leq d-2.$
\end{proof}

Now we want to estimate the dimension of the moduli space $M(S)$ of irrational minimal elliptic surfaces $S$ with $p_g=q=0$ when the number $N$ of the multiple fibers is fixed. We note that the moduli space $M(S)$ of $S$ is irreducible when we fix the type of the multiple fibers [Theorem 7.6, Chapter I in \cite{FM}]. Since irrational minimal elliptic surfaces $S$ with $p_g=q=0$ can be constructed by logarithmic transforms of rational elliptic surfaces [Section 1.6 in \cite{FM}] and the non-isotriviality of elliptic fibration is preserved by logarithmic transforms, a general element in $M(S)$ has a non-isotrivial elliptic fibration. If an elliptic fibration is not isotrivial then it yields the $j$ invariant of the smooth fibers to be non-constant. Therefore, it is enough to estimate the dimension of the moduli space $M(S)$ of irrational minimal elliptic surfaces $S$ with $p_g=q=0$ under the assumption that the elliptic fibration is not isotrivial.

\begin{lemma} Moreover, we assume that the elliptic fibration is not isotrivial. Let $\bar F$ be any fiber of the elliptic fibration.
Then we have  \label{stima} $h^0(\Omega_S^1(n\bar F))=n-1$  for $n> 0$.
 \end{lemma}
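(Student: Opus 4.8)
The plan is to identify the sheaf $\pi_*\Omega^1_S$ on $\bP^1$ completely; once we know $\pi_*\Omega^1_S\cong\Omega^1_{\bP^1}=\cO_{\bP^1}(-2)$ the lemma drops out for every $n$ at once. Since $\bar F=\pi^{-1}(\mathrm{pt})$ is a fibre and any two points of $\bP^1$ are linearly equivalent, $\cO_S(\bar F)\cong\pi^*\cO_{\bP^1}(1)$, so by the projection formula
\[ h^0(S,\Omega^1_S(n\bar F))=h^0\bigl(\bP^1,\pi_*\Omega^1_S\otimes\cO_{\bP^1}(n)\bigr). \]
Granting the identification, the right-hand side is $h^0(\cO_{\bP^1}(n-2))=n-1$ for $n>0$, as claimed.

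To compute $\pi_*\Omega^1_S$ I would push forward the relative cotangent sequence
\[ 0\to\pi^*\Omega^1_{\bP^1}\to\Omega^1_S\to\Omega^1_{S/\bP^1}\to0. \]
Because $\pi_*\cO_S=\cO_{\bP^1}$ and the fibres are connected, the projection formula gives $\pi_*\pi^*\Omega^1_{\bP^1}=\Omega^1_{\bP^1}=\cO_{\bP^1}(-2)$, and the first terms of the associated long exact sequence read
\[ 0\to\cO_{\bP^1}(-2)\to\pi_*\Omega^1_S\to\pi_*\Omega^1_{S/\bP^1}\xrightarrow{\ \delta\ }\Omega^1_{\bP^1}\otimes R^1\pi_*\cO_S. \]
Both outer sheaves of $\delta$ have rank one: over the smooth locus $U\subset\bP^1$ they restrict to the Hodge line bundle $\cL=\pi_*\omega_{S/\bP^1}$ and to its dual $R^1\pi_*\cO_S|_U\cong\cL^{-1}$. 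Under these identifications $\delta|_U$ is precisely the Kodaira–Spencer map $\cL\to\cL^{-1}\otimes\Omega^1_{\bP^1}$, that is, the derivative of the period map of the variation of Hodge structure $R^1\pi_*\bC$.

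The decisive point, and the only use of the hypothesis, is that non-isotriviality says exactly that this Kodaira–Spencer map does not vanish identically. A nonzero homomorphism of rank-one sheaves over the integral curve $\bP^1$ is injective, so $\ker\delta$ is a torsion sheaf; but $\ker\delta$ is simultaneously the image of the torsion-free sheaf $\pi_*\Omega^1_S$, hence torsion-free, and therefore $\ker\delta=0$. Consequently the arrow $\pi_*\Omega^1_S\to\pi_*\Omega^1_{S/\bP^1}$ is zero, the inclusion $\cO_{\bP^1}(-2)\hookrightarrow\pi_*\Omega^1_S$ is an isomorphism, and the lemma follows.

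I expect the main obstacle to be the behaviour over the finitely many singular and multiple fibres. There $\Omega^1_{S/\bP^1}$ acquires torsion, cohomology-and-base-change may fail, and neither $\pi_*\Omega^1_{S/\bP^1}$ nor $R^1\pi_*\cO_S$ need be locally free, so the clean identification of $\delta$ with the period derivative and the ``nonzero $\Rightarrow$ injective'' step must be justified with care. I would circumvent this by carrying out the Hodge-theoretic identification only over the smooth locus $U$, where $\Omega^1_{S/\bP^1}|_U=\omega_{S/\bP^1}|_U$ is a line bundle and Griffiths transversality presents $\delta|_U$ as the period derivative; since $\bP^1\setminus U$ is finite and $\ker\delta$ is torsion-free, its vanishing on the dense open set $U$ forces $\ker\delta=0$ globally. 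It is exactly at this step that non-isotriviality—rather than the finer numerical invariants of $S$—is indispensable.
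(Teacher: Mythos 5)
Your reduction via $\pi_*$ is a reasonable relative version of the paper's fibrewise argument, and your use of non-isotriviality (Kodaira--Spencer $\neq 0$ on the smooth locus, hence $\ker\delta$ torsion) is sound and parallels the paper's first step. But the decisive step of your proof is wrong: you claim that the image $Q$ of $\pi_*\Omega^1_S\to\pi_*\Omega^1_{S/\bP^1}$ is torsion-free ``because it is the image of a torsion-free sheaf.'' Quotients of torsion-free (even locally free) sheaves on a curve can perfectly well have torsion --- e.g.\ $\cO_{\bP^1}\twoheadrightarrow\cO_p$ --- so you cannot conclude $Q=0$ from $Q\subseteq\ker\delta$ being torsion. (The variant in your last paragraph, that $\ker\delta$ itself is torsion-free, is also unjustified: $\pi_*\Omega^1_{S/\bP^1}$ acquires torsion from the multiple fibres.) What your argument actually yields is an exact sequence $0\to\cO_{\bP^1}(-2)\to\pi_*\Omega^1_S\to Q\to 0$ with $Q$ torsion of some length $2-a\ge 0$, i.e.\ $\pi_*\Omega^1_S\cong\cO_{\bP^1}(-a)$ with $a\le 2$; this gives only the lower bound $h^0(\Omega^1_S(nF))\ge n-1$. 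The content of the lemma --- and the direction actually used in Proposition~\ref{moduli-elliptic} --- is the upper bound. Even after invoking $q(S)=h^0(\Omega^1_S)=0$ (which you never use) to get $a\ge 1$, the case $a=1$, i.e.\ $h^0(\Omega^1_S(nF))=n$, remains open.

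The missing ingredient is exactly the paper's second step: comparing $0\to\Omega^1_S\to\Omega^1_S(D)\to\Omega^1_S|_D\to 0$ with the log sequence $0\to\Omega^1_S\to\Omega^1_S(\log D)\to\cO_D\to 0$, one sees that the coboundary $H^0(\cO_{F_i})\to H^1(S,\Omega^1_S)$ sends $1$ to $c_1(F)\neq 0$; since $H^0(\cO_D)\to H^0(\Omega^1_S|_D)$ is an isomorphism (your Kodaira--Spencer step), the connecting map $H^0(\Omega^1_S|_D)\to H^1(\Omega^1_S)$ has rank exactly $1$, which cuts the count down from $n$ to $n-1$. Some such topological input ($c_1(F)\neq 0$) is indispensable and does not follow from non-isotriviality alone. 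A secondary point: your identity $\cO_S(\bar F)\cong\pi^*\cO_{\bP^1}(1)$ fails when $\bar F$ is a multiple fibre (there $k_i\bar F\equiv F$, not $\bar F\equiv F$), so you must first reduce to general fibres, as the paper does.
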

\begin{proof}
We remark that it is enough to prove $h^0(\Omega_S^1(nF))=n-1$ for a general fiber $F$ since the base curve is $\bP^1.$

We take the conormal sequence of the immersion of $F$ into $S:$
\begin{equation}\label{dpi}
0\to N^\ast_F\equiv \cO_F\to{\Omega^1_S}|_F\to \Omega_F^1\to 0.\end{equation}
Since the surface is elliptic, $K_S$ restricts to 0 on $F$ and therefore the sequence (1) is the same as the tangent sequence
\[ 0\to\cO_F\to T_S|_F\to \cO_F\to 0.\]
It follows that the associated coboundary map $\partial:H^0(F,\Omega_F^1)\to H^1(F,\cO_F)$ is the Kodaira-Spencer map.
Since the elliptic fibration is not isotrivial, the coboundary map $\partial$ is not zero.
Therefore the natural map $H^0(\cO_F)\to H^0(F,\Omega^1_S|_F)$ is an isomorphism.
This gives the first step of our proof.

Let $D=F_1+\cdots +F_n$, where $F_i$ are general fibers for $i=1,\ldots, n$. We get then the natural map $$H^0(\cO_D)\to H^0(\Omega^1_S|_D)$$ is
an isomorphism. Next we consider the commutative diagram
\begin{equation*}
\xymatrix{
& 0\ar[r] & \ar[d] \ar[r]  \Omega^1_S &\ar[d] \ar[r] \Omega^1_S(\log D)  \ar[d] \ar[r] &\cO_D \ar[d] \ar[r] &0\\
& 0\ar[r] & \Omega^1_S  \ar[r]  &  \Omega^1_S(D)  \ar[r] & \Omega^1_S|_D \ar[r]   &  0
 }\end{equation*}
One has that the connecting homomorphism $\bar\partial: H^0(F,\cO_F)\to H^1(S, \Omega_S^1)$ in the log sequence $0\to\Omega^1_S\to\Omega^1_S(\log F)\to \cO_F\to 0$ is not trivial since $\bar\partial(1)=c_1(F).$
Therefore the connecting homomorphism $\partial':H^0(\cO_D)=H^0(\cO_{F_1})\oplus\cdots\oplus H^0(\cO_{F_n})\to H^1(\Omega^1_S)$
in the above log diagram maps the generator $1\in H^0(\cO_{F_i})$ to $c_1(F_i)=c_1(F)$, so $\partial'$ has rank 1. Since $H^0(\cO_D)\to H^0(\Omega^1_S|_D)$ is an isomorphism
by the first step of our proof, and since $h^0(\Omega^1_{S})=0$ we
conclude that
$h^0(\Omega^1_{S}(\log D))=h^0(\Omega^1_{S}(D))=h^0(\Omega^1_S(nF))=n-1$.
\end{proof}

\begin{remark}
A similar result was proved for rational elliptic surfaces in   (\cite{LP}, Lemma(2)). Moreover we note that Lemma~\ref{stima} follows under the only assumption that the fibration is not isotrivial.
 \end{remark}
We recall the canonical divisor formula (see \cite{bpv}~[Chapter V, Corollary (12.3)]) for elliptic fibrations:
\begin{equation} \label{formula}
K_S=-F+\sum_i^N(k_i-1)F_i= (N-1)F-\sum_i F_i.\end{equation}

\begin{proposition} \label{moduli-elliptic}
Let $S$ be a minimal elliptic surface with $p_g=q=0$ of Kodaira dimension $1$. Moreover, we assume that the elliptic fibration is not isotrivial.
Let $\cM$ be a Kuranishi space of the deformation space of $S$. Then $M(S)=\dim \cM \leq 8+N.$
\end{proposition}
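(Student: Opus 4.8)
The plan is to bound the Kuranishi space by its first-order deformations, $\dim\cM\le h^1(S,T_S)$, and then to evaluate the right-hand side through Riemann--Roch and Serre duality, feeding in Lemma~\ref{stima} and the canonical bundle formula \eqref{formula}. First I would note that $K_S^2=0$ and $\chi(\cO_S)=1$, so the Riemann--Roch formula $\chi(T_S)=2K_S^2-10\chi(\cO_S)$ gives $\chi(T_S)=-10$ and hence
\[ h^1(S,T_S)=10+h^0(S,T_S)+h^2(S,T_S). \]
It therefore suffices to prove $h^2(S,T_S)\le N-2$ and $h^0(S,T_S)=0$.

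For $h^2$ I would dualize: by Serre duality on the surface $h^2(S,T_S)=h^0(S,\Omega^1_S\otimes K_S)$. The formula \eqref{formula} gives $K_S=(N-1)F-\sum_i F_i$ with $\sum_i F_i$ effective, so multiplication by the corresponding section is a sheaf inclusion $\cO_S(K_S)\hookrightarrow\cO_S((N-1)F)$; tensoring with the locally free sheaf $\Omega^1_S$ and passing to sections gives
\[ h^0(S,\Omega^1_S\otimes K_S)\le h^0(S,\Omega^1_S((N-1)F)). \]
Since $\mathrm{kod}(S)=1$ forces $N\ge2$ (indeed $K_S\sim_{\bQ}(N-1-\sum_i 1/k_i)F$ must have positive coefficient), we have $n=N-1>0$, so Lemma~\ref{stima} applies and yields $h^0(S,\Omega^1_S((N-1)F))=N-2$. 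Hence $h^2(S,T_S)\le N-2$.

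The hard part will be the vanishing $h^0(S,T_S)=0$, which is exactly where non-isotriviality enters. I would argue that a global vector field $v$ pushes forward under $d\pi$, using $\pi_*\pi^*T_{\bP^1}=T_{\bP^1}$, to a vector field $\bar v$ on $\bP^1$; since the elliptic fibration is the canonical (Iitaka) fibration it is preserved by the flow of $v$, so the induced flow on $\bP^1$ fixes the $j$-map $\bP^1\to\bP^1$. Non-isotriviality makes this map non-constant, whereas a nontrivial one-parameter subgroup of $\mathrm{Aut}(\bP^1)$ has one-dimensional generic orbits, so no non-constant function can be constant along them; thus $\bar v=0$ and $v$ is vertical, a section of $\ker d\pi$. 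On the smooth locus $\ker d\pi=\omega_{S/\bP^1}^{-1}=\cO_S(-K_S-2F)$, a class which meets every fiber in degree $0$ but is $\bQ$-linearly a negative multiple of $F$, so its pushforward to $\bP^1$ has negative degree and no sections; therefore $v=0$.

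Putting the three bounds together gives $h^1(S,T_S)\le 10+0+(N-2)=8+N$, whence $\dim\cM\le 8+N$. The one step deserving genuine care is this last vanishing: one must rule out a nowhere-zero relative (translation) vector field, and in carrying out the degree count for $\cO_S(-K_S-2F)$ one has to track the precise contributions of the multiple and singular fibers to $\ker d\pi$ and to its pushforward.
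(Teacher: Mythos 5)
Your proposal is correct and follows essentially the same route as the paper: Riemann--Roch plus Serre duality to reduce to $h^0(\Omega^1_S\otimes K_S)$, the effective divisor $\sum F_i$ to pass to $h^0(\Omega^1_S((N-1)F))\le N-2$ via Lemma~\ref{stima}, and non-isotriviality to kill $h^0(T_S)$. The only cosmetic difference is that the paper rules out infinitesimal automorphisms by noting the identity component of $\mathrm{Aut}(S)$ preserves the fibration and would force isogenous fibers, whereas you argue at the level of vector fields with a degree count on $\ker d\pi$ --- the same idea in Lie-algebra form.
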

\begin{proof}

We would like to estimate $h^1(T_S)=\dim H^1(S,T_S)$.
We recall that $H^0(S,T_S)=0.$  In fact the $j$ invariant is not constant.
This implies that the connected component of the identity of the automorphism group of $S$ must commute with the fibration. So if it is not trivial then all smooth fibers are isogeneous, again it contradicts the non-triviality of the $j$ invariant.
Therefore from Hirzebruch-Riemann-Roch Theorem and Serre duality (see \cite{cata}) we have
\[
h^1(T_S)=h^2(T_S)+10 \chi(\cO_S)-2K^2_S=10+h^0(\Omega_S^1(K_S)).
\]
We get
$h^0(\Omega^1_{S}(K_S))=h^0(\Omega^1_{S}((N-1)F-\sum F_i))\leq h^0(\Omega^1_{S}((N-1)F)) \leq N-2$
using Lemma~\ref{stima}.
We get
 $h^2(T_S)\leq N-2$ and then $ M(S)\leq 8+N.$
\end{proof}

\begin{proposition} \label{plane-elliptic}
A very general smooth plane curve of degree $d \geq 8$ cannot be birationally immersed into any elliptic surface $S$ with $p_g=q=0$ of Kodaira dimension $1$.
\end{proposition}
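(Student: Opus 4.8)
The plan is to assemble the dimension count set up in Section~1, now using the two estimates specific to elliptic surfaces of Kodaira dimension $1$ that have just been established. Arguing by contradiction, I would suppose that a very general smooth plane curve $C$ of degree $d\geq 8$ admits a birational immersion $\kappa:C\to S$ into a minimal elliptic surface $S$ with $p_g=q=0$ and $\mathrm{kod}(S)=1$. By the Hodge-theoretic reduction recalled in the introduction we may assume $S$ is simply connected with $p_g=0$, and by the logarithmic-transform description of such surfaces the fibration may be taken non-isotrivial, so that both Lemma~\ref{stima} and Proposition~\ref{moduli-elliptic} apply. As in Proposition~\ref{gen}, the Baire category argument forces the first-order deformations of $\kappa$ to account for the full moduli of plane curves, giving the master inequality
\[
\frac{(d+1)(d+2)}{2}-9 \;\leq\; n' + m',
\]
where $n'=\dim U$ bounds deformations of $\kappa$ with $S$ fixed and $m'=h^1(T_S)$ counts deformations of $S$.

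Next I would feed in the two bounds. From Proposition~\ref{moduli-elliptic} we have $m' = h^1(T_S) \le 8+N$, and from Proposition~\ref{cliff} (applicable since $K_S$ is nef for a minimal surface of nonnegative Kodaira dimension) we have $n'=\dim U \le g = \tfrac{(d-1)(d-2)}{2}$, using $\deg\kappa^\ast(K_S)\ge 0$ and that a plane curve of degree $\ge 4$ is non-hyperelliptic. The essential extra ingredient is Lemma~\ref{elliptic-1}, which bounds the number of multiple fibers by $N\le d-2$; substituting this into the moduli bound yields $m' \le 8 + (d-2) = d+6$. Combining everything gives
\[
\frac{(d+1)(d+2)}{2}-9 \;\leq\; \frac{(d-1)(d-2)}{2} + (d+6),
\]
and simplifying the left-hand difference $\tfrac{(d+1)(d+2)}{2}-\tfrac{(d-1)(d-2)}{2}=3d$ reduces this to $3d-9 \le d+6$, i.e. $2d\le 15$, so $d\le 7$. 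Hence no such immersion exists once $d\ge 8$, which is the claim.

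The main obstacle I anticipate is not the final arithmetic but justifying that the two bounds may be applied simultaneously and sharply. In particular, Proposition~\ref{cliff} only gives $\dim U\le g-\tfrac12\deg\kappa^\ast(K_S)$, and to reach the stated threshold $d\ge 8$ rather than a weaker bound one must be careful about how much $\deg\kappa^\ast(K_S)$ can be exploited: if $\kappa^\ast(K_S)$ has positive degree the inequality improves, so the tight case is $\deg\kappa^\ast(K_S)=0$, which must be checked for consistency with the non-isotriviality assumption driving Lemma~\ref{stima}. A secondary subtlety is that the reduction to non-isotrivial fibrations and the passage from $S$ to its minimal model must be compatible with the countable-family/Baire argument, so that the generic member of the deforming family of immersions genuinely lands in a non-isotrivial elliptic surface of the asserted type; once these compatibility points are secured, the numerical contradiction closes the proof.
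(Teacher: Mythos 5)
Your proposal is correct and follows essentially the same route as the paper's own proof: the Baire-category dimension count from Section 1, combined with $N\le d-2$ from Lemma~\ref{elliptic-1}, $M(S)\le 8+N$ from Proposition~\ref{moduli-elliptic}, and the Clifford bound of Proposition~\ref{cliff} with $\deg\kappa^\ast(K_S)\ge 0$ by nefness, yielding exactly the paper's inequality $2d<15$ and hence $d\le 7$. The compatibility points you flag (non-isotriviality via logarithmic transforms, minimality, non-rigidity of $\kappa(C)$) are precisely the ones the paper disposes of before and at the start of its proof.
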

\begin{proof} By Lemma~\ref{elliptic-1} we may assume that $N\leq d-2$, we first observe that the number of moduli of $S$ is by Proposition~\ref{moduli-elliptic}
\[M(S)\leq 8+N\leq 6+d< \frac{(d+1)(d+2)}{2}-9 \,\, \text{for} \,\, d\geq 5.\] It follows that $\kappa(C)$ cannot be rigid in $S.$

By the similar argument in the proof of Proposition~\ref{gen} and using Proposition~\ref{moduli-elliptic}
 \[ \frac {(d+1)(d+2)}{2}-9-8-N < g-\frac{\deg \kappa^\ast(K_S)}{2}.\]
  Therefore by Lemma~\ref{elliptic-1} we have
  \[\frac {(d+1)(d+2)}{2}-\frac {(d-1)(d-2)}{2}< 17+d-2-\frac{\deg \kappa^\ast(K_S)}{2}:\]
 \[ 2d< 15-\frac{\deg \kappa^\ast(K_S)}{2}.\]

 Therefore, since $K_S$ is nef, if $d\geq 8$ then there is no birational immersion $\kappa: C\to S.$
 \end{proof}

Combining Propositions~\ref{gen}, \ref{Enriques}, and \ref{plane-elliptic}, we get Theorem~\ref{immersion}.

\section{Elliptic surfaces without torsion versus plane curves}

This section treats a birational immersion from a very general smooth plane curve to some special families of elliptic surfaces.  We keep the same assumption made at the beginning of Section 2, that is, we  assume that $S$ is an elliptic surface with $p_g=q=0$ and the Kodaira dimension $1,$ but now we will also assume that

$\bullet$ \,\, ${\rm Pic}(S)$ is torsion free. \\

In other words, since $q=0$ we assume the vanishing of the first integral homology group of $S:$ $H_1(S,\bZ)=0.$

Let  $\equiv$ denote the linear equivalence of divisors which, in our case, is the homological equivalence.
Let $F$ be a general fiber of $\pi$ and $\{F_i\}_{\{i=1,\dots,N\}}$ the multiple fibers, i.e.
$F_i$ is effective and $k_i F_i\equiv F$ where $k_i\in \bZ, \ k_i>1.$ We remark from (\cite{Dol}, Chapter 2) that
\[ {\rm Tors}({\rm Pic}(S))={\rm ker}(\oplus_{i=1}^N \bZ/k_i\bZ \stackrel{\psi}{\rightarrow} \bZ/M\bZ)\]
where $M=\prod_{i=1}^{N}k_i$, $\psi((a_1, \ldots, a_N))=\sum a_iM_i$ (mod $M$), $M_i=M/k_i$. It follows $k_i$ and $k_j$ are relatively prime. We  may assume
$1<k_1< k_2<\dots <k_N.$

Since $S$ is not rational, we have $N\geq 2$ by the canonical divisor formula (\ref{formula}).
Let $\Lambda\subset {\rm Pic}(S)$  be the subgroup generated by the $F_i;$ $\Lambda$ is cyclic and let $\lambda$ be the generator such that $F=M\lambda$ with $M >0.$
We have
\[ F\equiv M \lambda,\ \ \ F_i=M_i\lambda. \]
 Therefore if we set $K_S=\rho \lambda$ then
\[\rho=((N-1)M-\sum_{i=1}^N  M_i) =M(N-1-\sum_{i=1}^N \frac{1}{k_i}).\]

Set $k_0=1$ we have $\rho=Ms,$ where
\[s= (N-\sum_{i=0}^N \frac{1}{k_i}).\]

One has immediately
\begin{proposition} \label{ro}
\begin{enumerate}
\item $\rho\geq 3$ with the only one exception $k_1=2,$ $k_2=3.$
\item If $N>2$ then one has $s>1$ with the only exception  $ k_1=2,$ $k_2=3$ $k_3=5.$
In this case $K_S=29\lambda$.
\item If $N=2$ then $K_S=((k_1-1)(k_2-1)-1)\lambda.$
\item If $N>2$ then we have $\rho>6N.$
\end{enumerate}
\end{proposition}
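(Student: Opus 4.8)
The plan is to prove all four assertions by elementary arithmetic built on the two relations already established: the equalities $F=M\lambda$, $F_i=M_i\lambda$ with $M=\prod_{i=1}^N k_i$ and $M_i=M/k_i$, together with the identity $\rho=(N-1)M-\sum_{i=1}^N M_i=Ms$, where $s=N-1-\sum_{i=1}^N\frac1{k_i}$. The decisive structural input is that the torsion-free hypothesis forces $k_1<\dots<k_N$ to be pairwise coprime, which is exactly what keeps the $k_i$ (hence $M$) from being small. I would prove the parts in the order (3), (2), (4), (1), since the remaining content of (1) follows formally from the other three.

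Part (3) is a one-line computation: for $N=2$ one has $\rho=M-M_1-M_2=k_1k_2-k_1-k_2=(k_1-1)(k_2-1)-1$, so $K_S=((k_1-1)(k_2-1)-1)\lambda$. Over coprime pairs $2\le k_1<k_2$ the minimum is attained only at $(2,3)$, giving $\rho=1$, whereas every other pair satisfies $(k_1-1)(k_2-1)\ge4$ and hence $\rho\ge3$; this is precisely the exceptional pair and furnishes the $N=2$ half of part (1).

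For part (2) I would analyze $s=N-1-\sum_{i=1}^N\frac1{k_i}$. When $N=3$, the inequality $s\le1$ reads $\sum\frac1{k_i}\ge1$; a short enumeration using coprimality (one is forced into $k_1=2$, then $k_2=3$, then $k_3\le6$ coprime to $6$, i.e. $k_3=5$) shows this can happen only for $(2,3,5)$, where $s=2-\frac{31}{30}=\frac{29}{30}$ and $\rho=Ms=29$, so $K_S=29\lambda$. When $N\ge4$, pairwise coprimality lets at most one $k_i$ equal $2$ and forces the remaining ones to be $\ge3$, so $\sum\frac1{k_i}\le\frac12+\frac{N-1}{3}$ and hence $s\ge\frac{4N-7}{6}>1$, with no exceptions.

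For part (4), observe first that it yields the remaining ($N\ge3$) half of part (1) at once, as $6N\ge18>3$. For $N=3$ the bound $\sum\frac1{k_i}\le\frac{31}{30}$ (maximal at $(2,3,5)$) gives $2-\sum\frac1{k_i}\ge\frac{29}{30}$, which together with $M\ge30$ yields $\rho=M\bigl(2-\sum\frac1{k_i}\bigr)\ge29>18$. For $N\ge4$ I would combine $s>1$ from part (2) with $M=\prod k_i\ge(N+1)!$ (the $k_i$ being distinct integers $\ge2$) to get $\rho=Ms>M\ge(N+1)!>6N$. The one point requiring genuine care is the exhaustiveness of the two enumerations — that $(2,3)$ and $(2,3,5)$ really are the sole configurations making $\rho$ or $s$ small — which I would secure by using pairwise coprimality to pin down $k_1$ and $k_2$ and thereby leave only finitely many candidates for the next multiplicity; everything else reduces to routine estimation.
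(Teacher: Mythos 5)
Your proof is correct and takes essentially the same approach as the paper's, whose own argument only comments on part (4) (treating $(2,3,5)$ separately and then using $\rho\geq\prod_i k_i$) and declares (1)--(3) immediate; your coprimality enumerations are exactly the omitted details. As a minor bonus, your estimate $M\geq (N+1)!>6N$ repairs a small slip in the paper, which writes $\rho>N!\geq 6N$ (false for $N=3$, since $3!=6<18$) where the factorial of $N+1$ is clearly intended.
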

\begin{proof}
Only the last part needs a comment. When $N=3$ with $k_1=2,$ $k_2=3$ $k_3=5$, we have
$\rho=29 > 18.$
Otherwise we have \[\rho\geq \prod_ik_i,\] it follows (it is a very rough estimate) $\rho> N!\geq 6N.$

\end{proof}
\begin{remark}
One has
\begin{enumerate}
\item $K_S=\lambda$ $\iff$ $k_1=2$ and $k_2=3.$
\item If $k_1=2$ and $k_2=k$ then $\rho=k-2.$
\item If $k_1=3$ and $k_2=4$ then $\rho=5$ and $2K_S=F_1+2F_2.$
\end{enumerate}
\end{remark}

\begin{theorem} \label{elliptic}
Let $C$ be a very general smooth plane curve of degree $d\geq 6$. Then there is no birational immersion $\kappa$ from $C$ into any elliptic surface $S$ with $p_g=q=0$ of Kodaira dimension $1$ if ${\rm Pic}(S)$ is torsion free.
\end{theorem}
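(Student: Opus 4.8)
The plan is to re-run the deformation count of Proposition~\ref{plane-elliptic}, but to exploit the torsion-freeness of $\mathrm{Pic}(S)$ through the sharp lower bounds on the canonical class recorded in Proposition~\ref{ro}. Write $K_S=\rho\lambda$ and set $\ell=\deg\kappa^\ast(\lambda)$. Since $F=M\lambda$ and $\deg\kappa^\ast(F)=\alpha\ge d-1>0$ is the degree of $f=\pi\circ\kappa$, we have $\alpha=M\ell$, so $\ell$ is a positive integer and $\deg\kappa^\ast(K_S)=\rho\ell\ge\rho$. Feeding $M(S)\le 8+N$ (Proposition~\ref{moduli-elliptic}), $N\le d-2$ (Lemma~\ref{elliptic-1}) and the Clifford bound $\dim U\le g-\tfrac12\deg\kappa^\ast(K_S)$ (Proposition~\ref{cliff}) into $\dim M(d)-M(S)\le\dim U$ gives, after using $g=\tfrac{(d-1)(d-2)}{2}$ and $\tfrac{(d+1)(d+2)}{2}-g=3d$, the master inequality
\[\frac{\rho\ell}{2}\le 17+N-3d.\]

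I would then split on $N$. If $N>2$, then $\rho>6N$ by Proposition~\ref{ro}(4), so $\tfrac12\rho\ell>3N$ and the master inequality forces $2N+3d<17$, impossible for $N\ge 3$ and $d\ge 6$. If $N=2$ and $d\ge 7$, the right-hand side $19-3d$ is negative whereas the left-hand side is positive, again a contradiction. Everything thus collapses to the single boundary case $d=6$, $N=2$, where the inequality reads $\rho\ell\le 2$. For $N=2$ Proposition~\ref{ro}(3) gives $\rho=(k_1-1)(k_2-1)-1$, whose smallest values for coprime $2\le k_1<k_2$ are $1$ (only for $k=(2,3)$) and then $3$; hence the lone survivor is the Dolgachev case $k=(2,3)$, $K_S=\lambda$, with $\ell\in\{1,2\}$.

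I expect this residual case to be the crux, since here the numerology is exactly tight and must be beaten by hand. When $\ell=2$ one has $\deg\kappa^\ast(K_S)=2$ and the count forces $\dim U=g-\tfrac12\deg\kappa^\ast(K_S)$, so the equality clause of Proposition~\ref{cliff} applies; but $C$, being a plane curve, is not hyperelliptic and $\kappa^\ast(K_S)=\kappa^\ast(\lambda)$ has degree $2\ne 0$, so $\kappa^\ast(K_S)\ne\cO_C$, a contradiction. When $\ell=1$ the count gives $h^0(N')\ge 9$; as $N'$ must be special and $\deg N'\le 17$, Clifford yields $h^0(N')=9$ with $\deg N'\in\{16,17\}$, and $\deg N'=16$ is excluded because Clifford equality fails on the non-hyperelliptic $C$. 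Hence $\deg N'=17=\deg N$, so $N'=N$ is a line bundle and $\kappa$ is an immersion. Taking determinants in $0\to T_C\to\kappa^\ast T_S\to N\to 0$ gives $N=K_C-\kappa^\ast(K_S)$; Riemann--Roch with $h^0(N)=9$ gives $h^1(N)=1$, and Serre duality then yields $h^0(\kappa^\ast\lambda)=1$, so $L:=\kappa^\ast(\lambda)=\cO_C(p)$ is effective of degree $1$.

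The final contradiction comes from the disjointness of the two multiple fibres. Since $F_1=3\lambda$ and $F_2=2\lambda$, the effective divisors $\kappa^\ast(F_1)$ and $\kappa^\ast(F_2)$ lie in the classes $3L=\cO_C(3p)$ and $2L=\cO_C(2p)$; because a very general plane sextic has gonality $5$, every effective divisor of degree $\le 4$ is rigid, so $\kappa^\ast(F_1)=3p$ and $\kappa^\ast(F_2)=2p$ as divisors. Then $\kappa(p)\in F_1$ and $\kappa(p)\in F_2$, which is absurd because $F_1$ and $F_2$ sit over distinct points of $\bP^1$, so $F_1\cap F_2=\emptyset$. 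This eliminates the last case and completes the proof of Theorem~\ref{elliptic}. The main obstacle is exactly this boundary analysis: the estimates of Propositions~\ref{ro} and \ref{moduli-elliptic} dispatch all $d\ge 7$ and all $N>2$ mechanically, while the Dolgachev case $k=(2,3)$ evades the dimension count and is killed only by the effectivity of $\kappa^\ast\lambda$ and the elementary fact that distinct fibres do not meet.
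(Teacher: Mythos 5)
Your proposal is correct, and its skeleton (the dimension count $\dim M(d)-M(S)\le\dim U\le g-\tfrac12\deg\kappa^\ast(K_S)$ combined with Lemma~\ref{elliptic-1}, Proposition~\ref{moduli-elliptic} and Proposition~\ref{ro} to reduce everything to $d=6$, $N=2$, $(k_1,k_2)=(2,3)$, $\rho=1$) is exactly the paper's. The two places where you genuinely diverge are both in the endgame, and both are improvements in rigor. First, the paper simply asserts $\kappa^\ast(K_S)=\cO_C(p)$; you actually prove effectivity, by showing $\deg N'=17$ forces $N=N'$ to be the line bundle $K_C\otimes\kappa^\ast(\lambda)^{-1}$ and then reading off $h^0(\kappa^\ast\lambda)=h^1(N)=1$ from Riemann--Roch and Serre duality. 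Second, to kill the residual case the paper identifies $\pi\circ\kappa$ with a projection from a point of $\bP^2$ (invoking the claim that every degree-$6$ map from a smooth plane sextic is such a projection) and contradicts the very-generality of $C$ via a total flex of order $6$ (or a tritangent line plus a line biflex-tangent); you instead use the gonality bound $\mathrm{gon}(C)=5$ to conclude that the effective divisors $\kappa^\ast(F_1)\in|3p|$ and $\kappa^\ast(F_2)\in|2p|$ must literally equal $3p$ and $2p$, contradicting $F_1\cap F_2=\emptyset$. Your route avoids the unproved classification of $g^1_6$'s, needs no special position of flexes (so it works for \emph{every} smooth sextic once the numerics are reached, not just a very general one), and is purely intersection-theoretic; the only cost is the extra subcase $\deg\kappa^\ast(\lambda)=2$, which arises because you use the non-strict inequality and which you correctly dispose of via the equality clause of Proposition~\ref{cliff} (the paper's strict inequality excludes it from the start). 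I see no gap.
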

\begin{proof}
We keep the notation of the previous section. Let $\pi: S\to\bP^1$ be the elliptic fibration, and let $N$ be the number of the multiple fibers.

The proof is given by contradiction. We assume that $\kappa: C\to S$ is a birational immersion where
$C$ is a smooth plane curve of degree $d>4.$

We will apply Proposition~\ref{cliff} and Lemma~\ref{elliptic-1}. We first observe that since $d\geq 6$ the number of moduli of $S$ is
\[M(S)\leq 8+N\leq 6+d< \frac{(d+1)(d+2)}{2}-9.\] It follows that $\kappa(C)$ cannot be rigid in $S.$
Arguing as in Proposition~\ref{gen} we get again using Proposition~\ref{cliff}
 \[\frac {(d+1)(d+2)}{2}-\frac {(d-1)(d-2)}{2}<9+M(S)-\frac{\deg \kappa^\ast(K_S)}{2}.\]
That is
 \[3d<9+8+N -\frac{\deg \kappa^\ast(K_S)}{2}=17+N-\rho\frac{\deg \kappa^\ast(\lambda)}{2}.\]

We have $ \kappa^\ast(K_S)=\rho \lambda.$  Since a positive multiple of $\lambda$ is $F$ moves linearly, so we have
$\deg\kappa^\ast(K_S)\geq \rho.$
Finally we get
\[3d <17+N- \frac{\rho}{2}.\]

When $N= 2$, $\rho=(k_1-1)(k_2-1)-1$ and $3d <19$. So $d\leq 6.$ When $N>2$ then $\rho/2>3N$, and it follows $3d <17-2N$. So $d<4.$

Now we have only to consider the case $d=6$ and $N=2$. This is possible only for $\rho=1$ since $3d<19-\frac{\rho}{2}$, and by Proposition~\ref{ro} this happens precisely when
$N=2, k_1=2, k_2=3.$

We have $\kappa^\ast(K_S)=\cO_C(p)$ for $p\in C.$  Set $\kappa(C)=D$, then we have
$D\cdot K_S=1$ and $D\cdot F=6.$ The composition  $\pi\circ \kappa: C\to \bP^1$ has degree $6$. Since the only rational maps of degree $6$ of a smooth curve $C\subset \bP^2$ are obtained by projecting from a point of $x\in \bP^2$ we obtain
$\cO_C(6p) =\kappa^\ast \cO_S(F)$  is the $g_6^2$ that gives the embedding of $C\to \bP^2.$ But this implies that $p$ is a flex point of the maximal order
$6$ on $C$, which contradicts the fact that $C$ is a very general plane curve of degree $6.$
(Alternatively intersecting $C$ with the multiple fibers we find on $C$  a $3$-tangent and a bitangent to $2$ flexes).
\end{proof}

\begin{remark} One can prove that a very general curve of genus $\geq 7$ (in the sense of moduli)
cannot be birationally immersed in any elliptic surface with $p_g=q=0$ of Kodaira dimension $1$ if ${\rm Pic}(S)$ is torsion free:

We consider the map $f=\pi\circ \kappa: C\to \bP^1.$ Let $\alpha$ be the degree of $f$. Then we have
$\alpha=\kappa(C)\cdot F= M(\kappa(C)\cdot \lambda).$
We remark that the points $p\in C$ such that $\kappa(p)\in F_i$ are ramification points of $f$ with multiplicity
a multiple of $k_i.$
Let $g$ be the genus of $C$. From the Hurwitz formula we have
\[ 2g-2\geq -2\alpha+ \sum^N_{i=1} (\alpha-\alpha/k_i)= \alpha((N-2)-\sum^N_{i=1}{1/k_i}). \]

Suppose $N> 4$. Clearly $\alpha\geq M.$ Since the $k_i$ are pairwise relatively prime,
we get:
\[ 2g-2\geq M(N-2 -(\frac{1}{2}+{\frac{1}{3}+\frac{1}{5}+\frac{1}{7}})-(N-4)\frac{1}{11})> (N-4)\frac{10M}{11} \]

Now since $k_1\geq 2$ and $k_i>2(i-1)$ we get \[M> 2^N (N-1)!\geq 2^N (N-1)^{\frac{N-1}{2}}.\]
Therefore we have
\[ 2g-2> (N-4)\frac{10M}{11}>  \frac{10}{11}2^N(N-4)(N-1)^{\frac{N-1}{2}}.\]
It implies that $N\leq \max {(4, \sqrt g)}$. By Propositions~\ref{gen}, \ref{moduli-elliptic}, and the proof of Theorem~\ref{elliptic}, we have the inequality
$3g-3 \leq g-1+ N+8$. It implies that $2g-2\leq \max({12, 8+ \sqrt g})$. This gives $g\leq 7.$

If $g=7$ then the above argument implies that $N=4$. But this cannot happen because it contradicts the Hurwitz formula
\[ 2g-2=12\geq 2\cdot 3\cdot 5\cdot 7(2-(\frac{1}{2}+\frac{1}{3}+\frac{1}{5}+\frac{1}{7})).\]
\end{remark}
\section{Proof of Main Theorem}
Now we are ready to prove our main Theorem (Theorem~\ref{main} in Introduction). We will give a proof by contradiction. Assume that a dominant rational map $f: X\dashrightarrow Y$ exists.

From Section 3.5 in \cite{gp} we may assume:
\begin{enumerate}
\item  $p_g(Y)=q(Y)=0$ and $Y$ is simply connected.
\item ${\rm Pic}(X)=\bZ {[L]},$ where $L$ is the hyperplane section of $X$.
\end{enumerate}

From the classification theory of algebraic surfaces, we have two cases for $Y:$ either $Y$ is of general type, or $Y$ is an elliptic surface
with Kodaira dimension $1.$

\begin{lemma} Let $C$ be a general hyperplane section of $X$. If $f: X\dashrightarrow Y$ is dominant then $f_C:C\to Y$ is birational onto its image.\label{sla}
\end{lemma}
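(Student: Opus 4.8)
The plan is to reduce the statement to a generic-injectivity assertion and then settle it by a dimension count on an incidence variety of hyperplanes. First I would choose a resolution of the indeterminacy of $f$, so that $f$ is a morphism on $X\setminus B$ for a finite set $B\subset X$; since a general hyperplane section $C=X\cap H$ avoids the finitely many points of $B$, the restriction $f_C=f|_C\colon C\to Y$ is a genuine morphism. As $f$ is a dominant rational map between surfaces it is generically finite, say of degree $e$, and its fibres over general points of $Y$ consist of $e$ distinct points; in particular $f$ cannot be constant on a general $C$ (otherwise its fibres would contain curves, contradicting generic finiteness), so the image $D=f_C(C)$ is a curve. Since we work in characteristic zero, to prove that $f_C\colon C\to D$ is birational it suffices to show that $f_C$ is generically injective, i.e. that for a general point $x\in C$ the point $x$ is the only preimage of $f(x)$ lying on $C$.

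The heart of the argument is the following incidence count. Set
$$W=\{(x,x')\in (X\setminus B)\times(X\setminus B) : f(x)=f(x'),\ x\neq x'\}.$$
Because $f$ is generically finite, the projection $W\to Y$, $(x,x')\mapsto f(x)$, has finite fibres over general points of $Y$, and over the finitely many points whose fibre is positive-dimensional it contributes only dimension $2$; hence every component of $W$ has dimension at most $\dim Y=2$. Now consider the incidence variety
$$\Gamma=\{(x,x',H)\in W\times \bP^{3\vee} : x\in H,\ x'\in H\}.$$
For a fixed pair $(x,x')\in W$ the two distinct points $x,x'$ impose independent linear conditions on $H$, so the hyperplanes containing both form a $\bP^1$; therefore the projection $\Gamma\to W$ has one-dimensional fibres and $\dim\Gamma\leq 2+1=3$.

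Projecting to the space of hyperplanes gives $\Gamma\to\bP^{3\vee}$, whose target has dimension $3$. Since $\dim\Gamma\leq 3$, the fibre of this map over a general $H$ is at most zero-dimensional: there are only finitely many pairs of distinct points of $C=X\cap H$ with the same image under $f$. Consequently $f_C$ is injective away from a finite subset of $C$, hence generically injective, and therefore birational onto its image $D$, as required. The point requiring care is the order of quantifiers: the bound $\dim\Gamma\le 3=\dim\bP^{3\vee}$ is precisely what allows me to pass from the easy observation that a general hyperplane through a general point avoids the rest of that point's fibre, to the stronger statement that a \emph{fixed} general $C$ carries only finitely many such bad pairs. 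I would also take some care to make the estimate $\dim W\le 2$ robust, treating separately the finitely many points $y\in Y$ over which the resolved map has a positive-dimensional fibre, since there $f^{-1}(y)\times f^{-1}(y)$ still contributes only dimension $2$ to $W$.
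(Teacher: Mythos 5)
Your argument is correct, but it is genuinely different from the one in the paper. The paper's proof is Hodge--theoretic: it invokes the fact that the Jacobian of a general hyperplane section $C$ of $X$ is simple, so that if $f_C\colon C\to f_C(C)$ had degree $\geq 2$ the normalization of the image would be forced to be rational (a nonzero quotient of a simple Jacobian under a degree $\geq 2$ map is impossible for $g\geq 2$), and then $Y$ would be swept out by rational curves, hence ruled --- contradicting the standing assumption that $Y$ is not ruled. Your proof instead is an elementary incidence/dimension count: the locus $W$ of distinct pairs identified by $f$ has dimension $\leq 2$ (with the correct separate treatment of the finitely many positive-dimensional fibres), two distinct points of $\bP^3$ impose independent conditions on hyperplanes, so $\dim\Gamma\leq 3=\dim\bP^{3\vee}$ and a general $H$ carries only finitely many identified pairs; generic injectivity in characteristic zero then gives birationality. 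What your route buys is generality and self-containedness: it uses nothing about $X$ being very general and nothing about $Y$ (it works even when $Y$ is ruled), only that the hyperplane system on $X\subset\bP^3$ is $3$-dimensional and separates pairs of points. What the paper's route buys is brevity given machinery already in place: the simplicity of the Jacobian is available from the Hodge-theoretic setup of \cite{gp}, and the non-ruledness of $Y$ is part of the standing hypotheses, so the conclusion is immediate. Your quantifier discussion (passing from a bound on $\dim\Gamma$ to a statement about a fixed general $H$) is exactly the point that needs care, and you handle it correctly.
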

\begin{proof} Since $C$ is a general hyperplane of $X$,  we may assume that a general point of $Y$ belongs $f_C(C).$
We recall that the Jacobian  of a general hyperplane section $C$ is simple. Then if $f_C: C\to f_C(C)$ is not birational then
the normalization of $f_C(C)$ is rational. Since $Y$ is not ruled it follows that $f$ is not dominant. Therefore it contradicts the assumption.
\end{proof}

\begin{proof} {\bf of Theorem~\ref{main}}.
Assume by contradiction that for a very general surface of degree $d$ we have a dominant rational map
to $Y.$ We get a birational immersion from a very general plane curve of degree $d$ into a surface $Y$ with $p_g=q=0$. The surface $Y$ is
either of general type or an elliptic surface
with Kodaira dimension $1.$ In the first case we have $d\leq 9$ by Proposition~\ref{gen}. But from Theorem 4.2.1 in \cite{gp} this is impossible.
If $Y$ is an elliptic surface then we get $d< 6$ by Theorem~\ref{elliptic}.

So $d=5$ is the only remaining case. After a resolution of the singularities of $f$, we get
\[\xymatrix{
  Z \ar[rr]^{p} \ar[dr]_{g} && X \ar@{.>}[dl]^{f} \\
  & Y.
}\]

Let $E$ be the exceptional divisor. We may assume [Section 3.1 in \cite{gp}]:
\begin{enumerate}
\item $g^\ast K_Y= rL-W$ where $r\geq0$ and $W$ is an effective divisor supported on $E.$
\item $ K_Z=g^\ast K_Y +R,$ $R= sL+E+W,$ $r+s=d-4=1$, and $s\geq 0.$
\end{enumerate}

Since the curve $F$ moves on $Y$, we get $r>0$. And since $g^\ast (K_Y)=\rho g^\ast(\lambda)$ we have
$\rho\leq r\leq d-4=1$. Therefore $\rho=1$ and $s=0.$  We have then
$N=2$, $k_1=2,$ and $k_2=3.$ The branch divisor
is then contained in the image $g(E)$ of the exceptional divisor $E$. This is a rigid divisor, since the rational curves cannot move on the non-ruled surface $Y.$ Let $M(X)$ be the number of moduli of $X$
and let $M(Y)$ be the number of moduli of $Y$. Proposition~\ref{moduli-elliptic} implies
$10\ge M(Y)\geq M(X)=40.$ Then we get a contradiction.
\end{proof}



\begin{thebibliography}{1}

\bibitem{ac}
E.~Arbarello and M.~Cornalba,
\newblock {\em Su una congettura di Petri}.
\newblock {Comment. Math. Helvetici 56 (1981), 1--38.}

\bibitem{ACGH}
E.~Arbarello, M.~Cornalba, P.~Griffiths and J.~Harris,
\newblock {Geometry of algebraic curves. Volume I}.
\newblock {Grundlehren der Mathematischen Wissenschaften 267. Springer-Verlag, 1985.}

\bibitem{ACG}
E.~Arbarello, M.~Cornalba and P.~Griffiths,
\newblock { Geometry of algebraic curves. Volume II. With a contribution by Joseph Daniel Harris}.
\newblock {Grundlehren der Mathematischen Wissenschaften 268. Springer-Verlag, 2011.}

\bibitem{bpv}
W.~Barth, C.~Peters and A.~Van de~Ven,
\newblock {Compact Complex Surfaces}.
\newblock {Ergebnisse der Mathematik und ihrer Grenzgebiete. Springer-Verlag, 1984.}

\bibitem{BP}
F.~Bastianelli and G.~Pirola,
\newblock {\em On dominant rational maps from products of curves to surfaces of general type}.
\newblock {Bull. Lond. Math. Soc. 45 (2013), 1310--1322.}

\bibitem{cata}
F.~Catanese,
\newblock {\em Moduli of algebraic surfaces}.
\newblock {Theory of moduli (Montecatini Terme, 1985), 1--83, Lecture Notes in Math. {\bf1337}, Springer, Berlin, 1988.}

\bibitem{Dol}
I.~Dolgachev,
\newblock {\em Algebraic surfaces with $q=p_g=0$.}
\newblock {Algebraic surfaces, 97--215 (C.I.M.E. Summer Sch. 1977), {\bf76}, Springer, Heidelberg, 2010.}

\bibitem{Dol2}
I.~Dolgachev,
\newblock {\em On Severi's conjecture on simply connected algebraic
surfaces.}
\newblock {Dokl. Akad, Nauk SSSR, 170(1966), 249-252 (Sov. Math. Doklady, 7 (1966), 1169--1172).}

\bibitem{FM}
R.~Friedman and J. Morgan,
\newblock {Smooth four-manifolds and complex surfaces}.
\newblock {Ergebnisse der Mathematik und ihrer Grenzgebiete (3), 27. Springer-Verlag, Berlin, 1994.}

\bibitem{Go}
L.~Godeaux,
\newblock {\em Sur une surface alg\'ebrique de genre zero et de bigenre deux.}
\newblock {Atti Accad. Naz. Lincei, 14 (1931), 479--481.}


\bibitem{gp}
L.~Guerra and G.~Pirola,
\newblock {\em On rational maps from a general surface in $\bP^3$ to surfaces of general type}.
\newblock {Adv. Geom. 8 (2008), 289--307.}

\bibitem{HM}
C.~Hacon and J.~McKernan,
\newblock {\em Boundedness of pluricanonical maps of varieties of general type}.
\newblock {Invent. Math. 166 (2006), 1--25. }




\bibitem{LP}
Y.~Lee and J.~Park,
\newblock {\em A simply connected surface of general type with $p_g=0$ and $K^2 =2$}.
\newblock {Invent. Math. 170 (2007), 483--505}.

\bibitem{LeeP}
Y.~Lee and G.~Pirola,
\newblock {\em On rational maps from the product of two general curves}.
\newblock {math.AG.arXiv:1411.4263}.


\bibitem{Ma}
K.~Maehara,
\newblock {\em A finiteness property of varieties of general type}.
\newblock {Math. Ann. 262 (1983), 101--123}.

\bibitem{Ta}
S.~Takayama,
\newblock {\em Pluricanonical systems on algebraic varieties of general type}.
\newblock {Invent. Math. 165 (2006), 551--587}.

\bibitem{Ts}
H.~Tsuji,
\newblock {\em Pluricanonical systems of projective varieties of general type. II}.
\newblock {Osaka J. Math.  44  (2007), 723--764}.






\end{thebibliography}
\end{document}